\newtheorem{theorem}{Theorem}[section]
\newtheorem{prop}[theorem]{Proposition}
\newtheorem{lem}[theorem]{Lemma}
\newtheorem{fact}[theorem]{Fact}
\newtheorem{cor}[theorem]{Corollary}
\newtheorem*{conj}{Conjecture}
\newtheorem*{thmA}{Theorem A}
\newtheorem*{corA}{Corollary A}
\theoremstyle{definition}
\theoremstyle{remark}
\ProvideTextCommandDefault{\cprime}{(U+042C)}
\newcommand{\Cal}[1]{\ensuremath{\mathcal{#1}}}
\newcommand{\Z}{\mathbb{Z}}
\newcommand{\C}{\mathbb{C}}
\newcommand{\Q}{\mathbb{Q}}
\newcommand{\R}{\mathbb{R}}
\DeclareMathOperator{\Gl}{Gl}
\begin{document}
\title{Expansions of the real field by discrete subgroups of $\Gl_n(\mathbb{C})$}


\thanks{This is a preprint version. Later versions might contain significant changes. The first author was partially supported by NSF grant DMS-1654725.
The second author was partially supported by the European Research Council under
the European Union’s Seventh Framework Programme (FP7/2007-2013) / ERC Grant agreement
no. 291111/ MODAG }

\author{Philipp Hieronymi}
\address
{Department of Mathematics\\University of Illinois at Urbana-Champaign\\1409 West Green Street\\Urbana, IL 61801}
\email{phierony@illinois.edu}
\urladdr{http://www.math.illinois.edu/\textasciitilde phierony}

\author{Erik Walsberg}
\address
{Department of Mathematics\\University of Illinois at Urbana-Champaign\\1409 West Green Street\\Urbana, IL 61801}
\email{erikw@illinois.edu}
\urladdr{http://www.math.illinois.edu/\textasciitilde erikw}

\author{Samantha Xu}
\address
{School of Social Work\\University of Illinois at Urbana-Champaign\\1010 West Nevada Street\\Urbana, IL 61801}
\email{samxu@illinois.edu}

\date{\today}

\maketitle

\begin{abstract}
Let $\Gamma$ be an infinite discrete subgroup of Gl$_n(\mathbb{C})$.
Then either $(\mathbb{R},<,+,\cdot,\Gamma)$ is interdefinable with $(\mathbb{R},<,+,\cdot, \lambda^{\mathbb{Z}})$ for some $\lambda \in \mathbb{R}$, or $(\mathbb{R},<,+,\cdot,\Gamma)$ defines the set of integers.
When $\Gamma$ is not virtually abelian, the second case holds.
\end{abstract}

\section{Introduction}
\noindent Let $\bar{\mathbb{R}} = (\mathbb{R},<,+,\cdot,0,1)$ be the real field. For $\lambda \in \mathbb{R}_{>0}$, set $\lambda^{\mathbb{Z}} := \{ \lambda^m : m \in \mathbb{Z}\}$.
Throughout this paper $\Gamma$ denotes a discrete subgroup of Gl$_{n}(\mathbb{C})$, and $G$ denotes a subgroup of Gl$_n(\mathbb{C})$.
We identify the set M$_n(\mathbb{C})$ of $n$-by-$n$ complex matrices with $\mathbb{C}^{n^2}$ and identify $\mathbb{C}$ with $\mathbb{R}^2$ in the usual way. Our main result is the following classfication of expansions of $\bar{\R}$ by a discrete subgroup of $\Gl_n(\C)$. 

\begin{thmA}\label{thm:main}
Let $\Gamma$ be an infinite discrete subgroup of $\Gl_n(\C)$. Then either
\begin{itemize}
\item $(\R,\Gamma)$ defines $\Z$ or
\item there is $\lambda \in \R_{>0}$ such that $(\bar{\R},\Gamma)$ is interdefinable with $(\bar{\R},\lambda^{\Z})$.
\end{itemize}
If $\Gamma$ is not virtually abelian, then $(\bar{\mathbb{R}},\Gamma)$ defines $\Z$.
\end{thmA}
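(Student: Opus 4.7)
The plan is to invoke the first bullet of Theorem~A and then rule out its tame alternative under the hypothesis that $\Gamma$ is not virtually abelian. Suppose, for contradiction, that $(\bar{\R},\Gamma)$ is interdefinable with $(\bar{\R},\lambda^{\Z})$ for some $\lambda\in\R_{>0}$; then $\Gamma\subset\R^{2n^{2}}$ is definable in the d-minimal structure $(\bar{\R},\lambda^{\Z})$. I would first apply the Tits alternative for subgroups of $\Gl_{n}(\C)$: either $\Gamma$ contains a non-abelian free subgroup $F_{2}=\langle A,B\rangle$, or $\Gamma$ is virtually solvable. In the latter case, a discrete virtually solvable subgroup of $\Gl_{n}(\C)$ is virtually polycyclic (Mal$'$cev--Auslander), and the standing hypothesis then forces $\Gamma$ to contain, after passing to a finite-index subgroup, a finitely generated discrete non-abelian nilpotent subgroup $H$, e.g.\ an integer Heisenberg lattice.

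The free case is handled by a counting argument. Submultiplicativity of the operator norm shows that the $\geq 3^{k}$ distinct elements of $F_{2}$ of word length $\leq k$ all lie in the Euclidean ball of radius $M^{k}$, where $M=\max(\|A^{\pm 1}\|,\|B^{\pm 1}\|)$; consequently $|\Gamma\cap B_{\textup{Euc}}(0,R)|\geq R^{c}$ for some $c>0$. On the other hand, $\lambda^{\Z}\cap[0,R]$ has size $O(\log R)$, and together with d-minimality of $(\bar{\R},\lambda^{\Z})$ this bounds the number of points of any definable closed discrete subset of $\R^{N}$ in a Euclidean ball of radius $R$ by a polynomial in $\log R$---contradicting the previous inequality.

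The non-abelian nilpotent case is where the main difficulty lies: the counting argument now fails, because a Heisenberg lattice has word-length growth of degree four, which translates to only polylogarithmic Euclidean growth. The plan is then to argue structurally. Any definable closed discrete subset of $\R^{N}$ in $(\bar{\R},\lambda^{\Z})$ should be essentially a finite union of semialgebraic images of finite products of copies of $\lambda^{\Z}$, i.e.\ a set of tuples of the form $\phi(\lambda^{k_{1}},\ldots,\lambda^{k_{s}})$ for a semialgebraic map $\phi$ and $(k_{1},\ldots,k_{s})\in\Z^{s}$. The aim is to show that no such set can be closed under matrix multiplication and inversion and carry a non-abelian group structure: the group operation would have to be induced by the additive structure on the exponents $k_{i}\in\Z$, which is commutative. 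Making this rigorous is the crux of the proof and likely requires either a classification of definable groups in $(\bar{\R},\lambda^{\Z})$ or an explicit analysis of how polynomial identities in matrix entries interact with the definable parametrization by copies of $\lambda^{\Z}$; this is the step I expect to be the main obstacle.
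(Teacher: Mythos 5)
Your proposal has a structural flaw before any of its sub-arguments begin: you assume the dichotomy of Theorem~A as a hypothesis in order to deduce the final clause, but in the paper the dichotomy is \emph{established} through the case split on whether $\Gamma$ is virtually abelian, and the non-virtually-abelian branch of that split \emph{is} the final clause. Assuming the dichotomy in order to deduce the final clause is therefore circular, and you never address the virtually abelian branch at all (Lemma~\ref{lem:ab-1} and Proposition~\ref{prop:lastab}), which is where the interdefinability with $\bar{\R}_\lambda$ is actually produced.

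Within the non-virtually-abelian case, the idea you are missing is non-diagonalizability, and the argument should be run directly on $\bar{\R}_\Gamma$ rather than by contradiction with interdefinability. When $\Gamma$ is virtually solvable and not virtually abelian, Mal\textquotesingle tsev's theorem (Fact~\ref{thm:lkm}) conjugates a finite-index subgroup into $\textup{UT}_n(\C)$; since the kernel of the quotient onto the diagonal consists of unipotents, a non-abelian subgroup of $\textup{UT}_n(\C)$ must contain a non-diagonalizable element (Lemma~\ref{prop:solv-1}). Lemma~\ref{lem:diag} then exploits the Jordan form of a non-diagonalizable $a \in \Gamma$: the $(0,1)$-entry of $ba^kb^{-1}$ is $k\lambda^{k-1}$ while the $(1,1)$-entry is $\lambda^k$, and a rational function in two such pairs of entries produces $i/j$, so the image of $\Gamma \times \Gamma$ contains $\Q_{>0}$ and is dense, and co-dense by countability of $\Gamma$. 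Fact~\ref{thm:hier}(1) then yields $\Z$. This is exactly the mechanism your ``structural analysis of $(\bar{\R},\lambda^{\Z})$-definable groups'' reaches for but does not find. Your reduction of the solvable case to a nilpotent one is also not valid: a discrete polycyclic group that is not virtually abelian need not contain any non-abelian nilpotent subgroup (e.g.\ $\Z^2 \rtimes_A \Z$ for a hyperbolic $A \in \Gl_2(\Z)$), so your case analysis fails to cover the solvable range. Moreover, the assertion that a Heisenberg lattice has ``only polylogarithmic Euclidean growth'' is false: the entries of the integer Heisenberg lattice grow polynomially in the word length, so the Euclidean point count is polynomial, not polylogarithmic.

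For the free case your estimates (submultiplicativity of the operator norm together with discreteness of $\Gamma$ controlling separation of the word-balls $S_m$) are essentially the paper's, but your endgame relies on the unproven claim that a $(\bar{\R},\lambda^{\Z})$-definable closed discrete subset of $\R^N$ has polylogarithmic counting function in Euclidean balls; this is not an off-the-shelf consequence of d-minimality. The paper packages the same estimates as a lower bound on the Assouad dimension of $\Gamma$ (Fact~\ref{fact:use-assouad}, Proposition~\ref{prop:exp}) and applies Fact~\ref{thm:hier}(2) directly to $\bar{\R}_\Gamma$, which is both non-circular and avoids the counting bound you would still need to establish.
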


\noindent By Hieronymi~\cite[Theorem 1.3]{discrete}, the structure $(\bar{\R},\lambda^{\Z},\mu^{\Z})$ defines $\Z$ whenever $\log_{\lambda} \mu \notin \mathbb{Q}$, and is interdefinable with $(\bar{\R},\lambda^{\Z})$ otherwise. Therefore Theorem A extends immediately to expansions of $\bar{\R}$ by multiple discrete subgroups of $\Gl_n(\C)$.

\begin{corA} Let $\Cal G$ be a collection of infinite discrete subgroups of various $\Gl_n(\C)$. Then either
\begin{itemize}
\item $(\bar{\mathbb{R}},\big(\Gamma\big)_{\Gamma \in \Cal G})$ defines $\Z$ or
\item there is $\lambda \in \R_{>0}$ such that $(\bar{\mathbb{R}},\big(\Gamma\big)_{\Gamma \in \Cal G})$ is interdefinable with $(\bar{\R},\lambda^{\Z})$.
\end{itemize}
\end{corA}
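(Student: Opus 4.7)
The plan is to bootstrap Corollary A directly from Theorem A and the cited result \cite[Theorem 1.3]{discrete}, by a dichotomy argument applied $\Gamma$-by-$\Gamma$ and then pair-by-pair.

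First I would apply Theorem A to each $\Gamma \in \Cal G$ individually. If there is any $\Gamma \in \Cal G$ such that $(\bar{\R},\Gamma)$ defines $\Z$, then the expansion $(\bar{\R},(\Gamma)_{\Gamma\in\Cal G})$, being a further expansion, also defines $\Z$ and we are in the first case. So assume henceforth that for every $\Gamma \in \Cal G$, the structure $(\bar{\R},\Gamma)$ is interdefinable with $(\bar{\R},\lambda_\Gamma^\Z)$ for some $\lambda_\Gamma \in \R_{>0}$. Since each $\Gamma$ is infinite, the associated $\lambda_\Gamma^\Z$ must also be infinite, so $\lambda_\Gamma \neq 1$; replacing $\lambda_\Gamma$ by $\lambda_\Gamma^{-1}$ if necessary (which does not change $\lambda_\Gamma^\Z$), we may assume $\lambda_\Gamma > 1$.

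Next I would inspect pairs. Suppose there exist $\Gamma_1,\Gamma_2 \in \Cal G$ with $\log_{\lambda_{\Gamma_1}} \lambda_{\Gamma_2} \notin \Q$. Then by \cite[Theorem 1.3]{discrete}, the structure $(\bar{\R},\lambda_{\Gamma_1}^\Z,\lambda_{\Gamma_2}^\Z)$ defines $\Z$. Since $\lambda_{\Gamma_i}^\Z$ is interdefinable with $\Gamma_i$ over $\bar{\R}$, the substructure $(\bar{\R},\Gamma_1,\Gamma_2)$ already defines $\Z$, and hence so does $(\bar{\R},(\Gamma)_{\Gamma\in\Cal G})$.

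The remaining case is that $\log_{\lambda_{\Gamma_1}} \lambda_{\Gamma_2} \in \Q$ for all pairs $\Gamma_1,\Gamma_2 \in \Cal G$. Here I would fix any $\Gamma_0 \in \Cal G$ and set $\lambda := \lambda_{\Gamma_0}$ (if $\Cal G = \emptyset$ the statement is trivial with $\lambda = 1$). For every $\Gamma \in \Cal G$, applying \cite[Theorem 1.3]{discrete} to $\lambda$ and $\lambda_\Gamma$ yields that $(\bar{\R},\lambda^\Z,\lambda_\Gamma^\Z)$ is interdefinable with $(\bar{\R},\lambda^\Z)$; in particular $\lambda_\Gamma^\Z$, and therefore $\Gamma$, is definable in $(\bar{\R},\lambda^\Z)$. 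Conversely, $\lambda^\Z$ is definable in $(\bar{\R},\Gamma_0)$, hence in $(\bar{\R},(\Gamma)_{\Gamma\in\Cal G})$. This gives the desired interdefinability.

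The argument is essentially bookkeeping on top of Theorem A and the binary result of \cite{discrete}; no single step is a genuine obstacle. The only point worth a moment of care is the last paragraph, where it is tempting to worry that with infinitely many $\lambda_\Gamma$ rationally related to a common base, one might need $\lambda$-th roots of unbounded order and so no single $\lambda$ could suffice. The resolution is that interdefinability in \cite[Theorem 1.3]{discrete} goes well beyond containment: from $\lambda^\Z$ and a rational $p/q = \log_\lambda \mu$, the set $\mu^\Z$ is defined as $\{y > 0 : y^q \in (\lambda^p)^\Z\}$ using the field structure, so a single $\lambda$ recovers all rationally-related $\mu^\Z$ simultaneously.
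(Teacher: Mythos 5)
Your proof is correct and is essentially the argument the paper gives (in a single sentence: apply Theorem A to each $\Gamma$ individually and then invoke the dichotomy of \cite[Theorem 1.3]{discrete} for $(\bar{\R},\lambda^{\Z},\mu^{\Z})$ to handle the collection of resulting $\lambda_\Gamma$). You have merely spelled out the bookkeeping that the paper calls ``immediate.''
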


\noindent The dichotomies in Theorem A and Corollary A are arguably as strong as they can be. An expansion of the real field that defines $\Z$, has not only an undecidable theory, but also defines every real projective set in sense of descriptive set theory (see Kechris \cite[37.6]{kechris}). From a model-theoretic/geometric point of view such a structure is a wild as can be. On the other hand, by van den Dries \cite{vdd-Powers2} the structure $(\bar{\R},\lambda^{\Z})$ has a decidable theory whenever $\lambda$ is recursive, and admits quantifier-elimination in a suitably extended language. It satisfies combinatorical model-theoretic tameness conditions such as NIP and distality (see \cite{GH-Dependent, HN-distal}). Furthermore, it follows from these results that every subset of $\R^n$ definable in $(\bar{\R},\lambda^{\Z})$ is a boolean combination of open sets, and thus $(\bar{\R},\lambda^{\Z})$ defines only sets on the lowest level of the Borel hierarchy. See Miller \cite{Miller-tame} for more on tameness in expansions of the real field.
\newline

\noindent Our proof of Theorem A relies crucially on the following two criteria for the definability of $\Z$ in expansions of the real field.

\begin{fact}\label{thm:hier}
Suppose $D \subseteq \mathbb{R}^k$ is discrete.
\begin{enumerate}
\item If $(\bar{\mathbb{R}},D)$ defines a subset of $\mathbb{R}$ that is dense and co-dense in a nonempty open interval, then $(\bar{\mathbb{R}},D)$ defines $\Z$.
\item If $D$ has positive Assouad dimension, then $(\bar{\mathbb{R}},D)$ defines $\Z$.
\end{enumerate}
\end{fact}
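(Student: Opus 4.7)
I treat the two parts separately; both rely on extracting sufficient multi-scale complexity from $D$ to produce a definable copy of $\mathbb{N}$, from which $\mathbb{Z}$ follows easily by symmetrization in the ordered field structure.

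For part~(1), let $E\subseteq\mathbb{R}$ be $(\bar{\mathbb{R}},D)$-definable and dense and co-dense in a nonempty open interval $I$. The plan is to use the discreteness of $D$ to enumerate a cofinal piece of $D$ while simultaneously selecting distinct points of $E$. For each bounded box $B\subseteq\mathbb{R}^k$ the set $D\cap B$ is finite, so it carries a parameter-definable linear order (say the restriction of the lexicographic order). Walking along this order, I would define by bounded recursion a sequence of points in $I$ whose successive terms alternate between $E$ and $I\setminus E$ and sit in ever-smaller sub-intervals; density and co-density of $E$ guarantees that a next choice always exists. Letting $B$ exhaust $\mathbb{R}^k$, the result is a definable unbounded discrete subset of $\mathbb{R}$ of order type $\omega$, from which $\mathbb{Z}$ is definable.

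For part~(2), the plan is to reduce to part~(1) by manufacturing a dense/co-dense definable set from $D$ itself. Positive Assouad dimension of $D$ yields $s>0$, points $x_k$, and radii $r_k<R_k$ with $R_k/r_k\to\infty$, such that $D\cap B(x_k,R_k)$ cannot be covered by fewer than $(R_k/r_k)^s$ balls of radius $r_k$. Consider the rescalings $T_k(D\cap B(x_k,R_k))$ where $T_k(y)=(y-x_k)/R_k$. For $k$ large these sets are ``dense at many scales'' inside the unit ball while remaining discrete. A Hausdorff-type limit produces a closed set $K\subseteq [0,1]^k$ that is nowhere dense but of positive Hausdorff dimension. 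Some coordinate projection of $K$ is then dense and co-dense in a sub-interval, so part~(1) applies.

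The main obstacle in part~(1) is carrying out the recursion inside $(\bar{\mathbb{R}},D)$ without circularly assuming that $\mathbb{N}$ is already definable; the trick is to stay inside bounded boxes where $D$ is finite and therefore definably well-ordered, and to thread the recursion through these orderings uniformly in the box. In part~(2) the delicate step is producing the Hausdorff limit definably: general Hausdorff limits are not definable, so one must either realize the limit as a parameter-definable intersection of rescaled neighborhoods, or, preferably, show via a finitary pigeonhole inside the Assouad condition that a \emph{single} well-chosen rescaling already yields a projection that is dense and co-dense in some subinterval, obviating the need for a genuine limit.
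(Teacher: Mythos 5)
The paper does not prove this Fact; it cites it. Part~(1) is \cite[Theorem~E]{discrete2}, a substantial theorem on closed discrete sets in expansions of the real field, and part~(2) is \cite[Theorem~A]{HM}, which is proven there \emph{using} part~(1) but via a nontrivial argument of its own. Your proposal, read as a free-standing proof, has several genuine gaps, and you yourself flag the most important one without resolving it.

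In part~(1), the central move is to define a sequence ``by bounded recursion'' and take a limit over boxes. First-order logic over $(\bar{\mathbb{R}},D)$ does not support recursion of unbounded length: to carry out a recursion of length $m$ for arbitrary $m$ you need a definable coding of finite sequences of unbounded length, which is exactly the kind of arithmetic you are trying to define. Restricting to a bounded box where $D$ is finite does not help, because the ``length'' of the walk is still unbounded as the box grows, and the passage from boxwise-definable objects to a single globally-definable object is not automatic. Worse, even granting the recursion, your conclusion does not follow: producing a definable unbounded discrete subset of $\mathbb{R}$ of order type $\omega$ is far from producing a definable copy of $\mathbb{Z}$ with $+$ and $\cdot$. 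The structure $(\bar{\mathbb{R}},2^{\mathbb{Z}})$ already defines an unbounded discrete set of order type $\omega$ (namely $2^{\mathbb{N}}$) and yet does \emph{not} define $\mathbb{Z}$; this is the entire point of the dichotomy in Theorem~A. The real theorem of \cite{discrete2} is proved by a careful analysis producing a definable function whose graph genuinely codes arithmetic, not by exhibiting an $\omega$-ordered set.

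In part~(2), the reduction to part~(1) is indeed the strategy of \cite{HM}, but your mechanism does not work. A Hausdorff (Gromov--Hausdorff) limit of rescalings of $D$ is a metric limit, not a first-order definable object, and as you note there is no general way to realize such a limit inside the structure. Your fallback, that a single rescaling already yields a projection dense and co-dense in a subinterval, is false: any affine image of a discrete set is discrete, hence closed and nowhere dense, so no single rescaling can produce a dense set. Finally, even if one somehow obtained a definable nowhere dense $K$ of positive Hausdorff dimension, it is not true that some coordinate projection of $K$ must be dense and co-dense in a subinterval; projections of Cantor sets of any positive dimension below the ambient dimension can themselves be nowhere dense, and positive Hausdorff dimension does not give positive Lebesgue measure. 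The actual argument in \cite{HM} exploits Assouad dimension in a structurally different way, essentially to definably produce a set with Cantor-like recursive structure and then invoke \cite{discrete2} on a suitable definable set, rather than by taking a geometric limit.

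In short: the paper treats Fact~\ref{thm:hier} as an imported black box, and your proposal underestimates the difficulty of both halves. The circularity in part~(1) and the non-definability of limits in part~(2) are not side issues to be finessed; they are the heart of why the cited theorems are hard.
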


\noindent The first statement is \cite[Theorem E]{discrete2}, a fundamental theorem on first-order expansions of $\bar{\mathbb{R}}$, and the second claim is proven using the first in Hieronymi and Miller \cite[Theorem A]{HM}.
We recall the definition of Assouad dimension in Section \ref{section:exp}.
This important metric dimension bounds more familiar metric dimensions (such as Hausdorff and Minkowski dimension) from above.
We refer to \cite{HM} for a more detailed discussion of Assouad dimension and its relevance to definability theory.\newline

\noindent The outline of our proof of Theorem A is as follows. Let $\Gamma$ be a discrete, infinite subgroup of $\Gl_n(\C)$. Using Fact \ref{thm:hier}(1), we first show that $(\bar{\R},\Gamma)$ defines $\Z$ whenever $\Gamma$ contains a non-diagonalizable matrix. It follows from a theorem of Mal\textquotesingle tsev that $(\bar{\R},\Gamma)$ defines $\Z$ when $\Gamma$ is virtually solvable and not virtually abelian. In the case that $\Gamma$ is not virtually solvable, we prove using Tits' alternative that $\Gamma$ has positive Assouad dimension, and hence $(\bar{\R},\Gamma)$ defines $\Z$ by Fact \ref{thm:hier}(2). We conclude the proof of Theorem A by proving that whenever $\Gamma$ is virtually abelian and $(\bar{\R},\Gamma)$ does not define $\Z$, then $(\bar{\R},\Gamma)$ is interdefinable with $(\bar{\R},\lambda^\Z)$ for some $\lambda \in \R_{>0}$. 
Along the way we give (Lemma~\ref{prop:heisen0}) an elementary proof showing that a torsion free non abelian nilpotent subgroup of Gl$_n(\mathbb{C})$ has a non-diagonalizable element.
As every finitely generated subgroup of Gl$_n(\mathbb{C})$ is either virtually nilpotent or has exponential growth, this yields a more direct proof of Theorem A in the case when $\Gamma$ is finitely generated. \newline

\noindent We want to make an extra comment about the case when $\Gamma$ is a discrete, virtually solvable, and not virtually abelian subgroup of $\Gl_n(\C)$. The \textbf{Novosibirsk theorem} \cite{Noskov} of Noskov (following work of Mal\textquotesingle stev, Ershov, and Romanovskii) shows that a finitely generated, virtually solvable and non-virtually abelian group interprets $(\mathbb{Z},+,\cdot)$.
It trivially follows that if $G$ is finitely generated, virtually solvable, and non-virtually abelian, then $(\bar{\mathbb{R}},G)$ interprets $(\mathbb{Z},+,\cdot)$.
However, it does not directly follow that $(\bar{\mathbb{R}},G)$ defines $\Z$. We use an entirely different method below to show that if $G$ is in addition discrete, then $(\bar{\mathbb{R}},G)$ defines $\Z$. Our method also applies when $G$ is not finitely generated, but relies crucially on the discreteness of $G$.\newline

\noindent This paper is by no means the first paper to study expansions of the real field by subgroups of $\Gl_n(\C)$. Indeed, there is a large body of work on this subject, often not explicitly mentioning $\Gl_n(\C)$. Because we see this paper as part of a larger investigation, we survey some of the earlier results and state a conjecture. It is convenient to consider three disctinct classes of such expansion. By Miller and Speissegger~\cite{MS99} every first-order expansion $\mathscr{R}$ of $\bar{\mathbb{R}}$ satisfies at least one of the following: 
\begin{enumerate}
\item $\mathscr{R}$ is o-minimal,
\item $\mathscr{R}$ defines an infinite discrete subset of $\mathbb{R}$,
\item $\mathscr{R}$ defines a dense and co-dense subset of $\mathbb{R}$.
\end{enumerate}
The \textbf{open core} $\mathscr{R}^\circ$ of $\mathscr{R}$ is the expansion of $(\R,<)$ generated by all open $\mathscr{R}$-definable subsets of all $\mathbb{R}^k$. By \cite{MS99}, if $\mathscr{R}$ does not satisfy (2), then $\mathscr{R}^\circ$ is o-minimal.\newline

The case when $\mathscr{R}$ is o-minimal, is largely understood.
Wilkie's famous theorem \cite{wilkie} that $(\bar{\mathbb{R}},\exp)$ is o-minimal is crucial.
This shows the expansion of $\bar{\mathbb{R}}$ by the subgroup
$$ \left\{ \begin{pmatrix}
1 & 0 & t\\[3pt]
0 & \lambda^t & 0\\[3pt]
0 & 0 & 1
\end{pmatrix} : t \in \mathbb{R} \right\} $$
 is o-minimal for $\lambda \in \mathbb{R}_{>0}$, and so is the expansion of $\bar{\mathbb{R}}$ by any subgroup of the form
$$ \left\{ \begin{pmatrix} t^s & 0 \\[3pt] 0 & t^r \end{pmatrix} : t \in \mathbb{R}_{>0} \right\} $$
for $s,r \in \mathbb{R}_{>0}$. Indeed, by Peterzil, Pillary, and Starchenko~\cite{PPS-linear}, whenever an expansion $(\bar{\R},G)$ by a subgroup $G$ of $\Gl_n(\R)$ is o-minimal, then $G$ is already definable in $(\bar{\mathbb{R}},\exp)$. Futhermore, note that by a classical theorem of Tannaka and Chevalley~\cite{Chevalley} every compact subgroup of Gl$_n(\mathbb{C})$ is the group of real points on an algebraic group defined over $\mathbb{R}$. Thus every compact subgroup of $\Gl_n(\mathbb{C})$ is $\bar{\mathbb{R}}$-definable, and therefore the case of expansions by compact subgroups of $\Gl_n(\C)$ is understood as well.\newline

We now consider the case when infinite discrete sets are definable.  Corollary A for discrete subgroups of $\C^{\times}$ follows easily from the proof of \cite[Theorem 1.6]{discrete}. While Corollary A handles the case of expansions by discrete subgroups of $\Gl_n(\C)$, there are examples of subgroups of $\Gl_n(\C)$ that define infinite discrete sets, but fail the conclusion of Theorem A. Given $\alpha \in \mathbb{R}^\times$ the logarithmic spiral
$$
 S_\alpha  = \{ (\exp( t) \sin(\alpha t), \exp(t) \cos(\alpha t) ) : t \in \mathbb{R} \}
$$
is a subgroup of $\mathbb{C}^\times$.
Let $\mathfrak{s}$ and $\mathfrak{e}$ be the restrictions of $\sin$ and $\exp$ to $[0,2\pi]$, respectively.
Then $(\bar{\mathbb{R}},S_\alpha)$ is a reduct of $( \bar{\mathbb{R}}, \mathfrak{s}, \mathfrak{e}, \lambda^{\mathbb{Z}})$ when $\lambda = \exp(2\pi\alpha)$, as was first observed by Miller and Speissegger.
As $(\bar{\mathbb{R}},\mathfrak{s},\mathfrak{u})$ is o-minimal with field of exponents $\mathbb{Q}$, the structure $(\bar{\mathbb{R}},S_\alpha)$ is d-minimal\footnote{A expansion $\mathscr{R}$ of $\bar{\mathbb{R}}$ is \textbf{d-minimal} if every definable unary set in every model of the theory of $\mathscr{R}$ is a union of an open set and finitely many discrete sets.} by Miller \cite[Theorem 3.4.2]{Miller-tame} and thus does not define $\Z$. It can be checked that $(\bar{\mathbb{R}},S_\alpha)$ defines a analytic function that is not semi-algebraic\footnote{By induction on the complexity of terms it follows easily from [Theorem II, vdD] that the definable functions in $(\bar{\R},\lambda^{\Z})$ are given piecewise by a finite compositions of $x\mapsto \max \Big(\{0\} \cup \big(\lambda^{\Z} \cap [-\infty,x]\big)\Big)$ and functions definable in $\bar{\R}$. From this one can deduce that every definable function in this structure is piecewise semi-algebraic.}, and thus is not interdefinable with $(\bar{\R},\lambda^{\Z})$ for any $\lambda \in \R_{>0}$. \newline

Most work in the case of expansions that define dense and co-dense sets, concerns expansions by finite rank subgroups of $\mathbb{C}^\times$ (see introduction of \cite{Erin-thesis} for a thorough discussion of expansions by subgroups of $\mathbb{C}^\times$).
In \cite{vdDG-groups} van den Dries and G\"{u}nayd\i n showed that an expansion of $\bar{\mathbb{R}}$ by a finitely generated dense subgroup of $(\mathbb{R}_{>0},\cdot)$ admits quantifier-elimination in a suitably extend language. G\"{u}nayd\i n~\cite{Ayhan-thesis} and Belegradek and Zilber~\cite{Belegradek-Zilber} proved similar results for the expansion of $\bar{\mathbb{R}}$ by a dense finite rank subgroup of the unit circle $\mathbb{U} := \{ a \in \mathbb{C}^\times : |a| = 1\}$. This covers the case when $G$ is the group of roots of unity. In all these cases the open core of the resulting expansion is interdefinable with $\bar{\mathbb{R}}$. This does not always have to be the case. In Caulfield~\cite{Erin-paper} studies expansions by subgroups of $\mathbb{C}^\times$ of the form
$$\{ \lambda^{k} \exp(i\alpha l) : k,l \in \mathbb{Z}\} \quad \text{where  } \lambda \in \mathbb{R}_{>0}  \text{  and  } \alpha \in \mathbb{R} \setminus \pi \mathbb{Q}.$$
Such an expansions obviously defines a dense and co-dense subset of $\R$, but by \cite{Erin-paper} its open core is interdefinable with $(\bar{\mathbb{R}},\lambda^\Z)$. Futhermore, even if the open core is o-minimal, it does not have to be interdefinable with $\bar{\R}$. By \cite{H-tau} there is a co-countable subset $\Lambda$ of $\mathbb{R}_{>0}$ such that if $r \in \Lambda$ and $H$ is a finitely generated dense subgroup of $(\mathbb{R}_{>0},\cdot)$ contained in the algebraic closure of $\mathbb{Q}(r)$, then the open core of the expansion of $\bar{\mathbb{R}}$ by the subgroup
$$ \left\{ \begin{pmatrix} t & 0 \\[3pt] 0 & t^r \end{pmatrix} : t \in H \right\}  $$
 is interdefinable with the expansion of $\bar{\mathbb{R}}$ by the power function $t \mapsto t^r : \mathbb{R}_{>0} \to \mathbb{R}_{>0}$.\newline 

All these previous results suggest that the next class of subgroups of $\Gl_n(\C)$ for which we can hope to prove a classification comparable to Theorem A, is the class of finitely generated subgroups. Here the following conjecture seems natural, but most likely very hard to prove. Let $\bar{\mathbb{R}}_{\text{Pow}}$ be the expansion of $\bar{\mathbb{R}}$ by all power functions $\mathbb{R}_{>0} \to \mathbb{R}_{>0}$ of the form $t \mapsto t^r$ for $r \in \mathbb{R}^\times$.

\begin{conj}
Let $G$ be a finitely generated subgroup of $\Gl_n(\C)$ such that $(\bar{\mathbb{R}},G)$ does not define $\Z$. Then the open core of $(\bar{\mathbb{R}},G)$ is a reduct of $\bar{\mathbb{R}}_{\textrm{Pow}}$ or of $(\bar{\mathbb{R}},S_\alpha)$ for some  $\alpha \in \mathbb{R}_{>0}$.
\end{conj}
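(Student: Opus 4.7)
The plan is to apply the Miller--Speissegger trichotomy to $(\bar{\R},G)$. When $(\bar{\R},G)$ is o-minimal, the Peterzil--Pillay--Starchenko theorem \cite{PPS-linear} makes $G$ definable in $(\bar{\R},\exp)$, and a Jordan-form analysis of the resulting one-parameter Lie subgroups of $\Gl_n(\C)$ should force the open core to be the reduct of $\bar{\R}_{\mathrm{Pow}}$ generated by the power functions that appear in the real diagonal part of $G$. When $(\bar{\R},G)$ defines an infinite discrete subset of $\R$ but not $\Z$, one aims to use Corollary~A to reduce to $\lambda^{\Z}$, which is already a reduct of $\bar{\R}_{\mathrm{Pow}}$. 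The genuinely new case is when $(\bar{\R},G)$ defines a dense and co-dense subset of $\R$; there $G$ need not be discrete and most of the work lies.

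In that dense case I would use finite generation together with the Tits alternative to reduce to $G$ virtually abelian, exactly as in the outline of Theorem~A. If $G$ is not virtually solvable, I would intersect $G$ with a compact fundamental domain for the identity component $\overline{G}^{\circ}$ of its topological closure in $\Gl_n(\C)$ to produce a definable discrete slice of positive Assouad dimension, and then invoke Fact~\ref{thm:hier}(2). If $G$ is virtually solvable but not virtually abelian, I would upgrade the Mal'tsev/Heisenberg argument hinted at in the introduction beyond the discrete setting, so as to produce a definable dense co-dense set of the required self-similar form and invoke Fact~\ref{thm:hier}(1).

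Now with $G$ virtually abelian, pass to a finite-index abelian $A \le G$. A non-diagonalizable element of $A$ again forces $(\bar{\R},G)$ to define $\Z$ by the non-diagonalizability argument used in the outline of Theorem~A. Otherwise $A$ is conjugate to a subgroup of the diagonal torus of $\Gl_n(\C)$, and the generators of $A$ are recorded by a finitely generated subgroup $\Lambda \subset \R^{2n}$ of logarithms. Combining the methods of Caulfield \cite{Erin-paper}, van den Dries--G\"unayd\i n \cite{vdDG-groups}, and the power-function analysis of \cite{H-tau}, I would argue that each independent real direction of $\Lambda$ contributes a power function to the open core, while each rational family of imaginary ratios contributes a single logarithmic spiral $S_\alpha$, giving the conjectured classification.

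The main obstacle is ruling out ``mixed'' configurations without defining $\Z$: one must show that two independent spiral directions, or a coexistence of a nontrivial power direction with a nontrivial spiral direction, must already produce a dense and co-dense subset of $\R$ whose self-similar structure forces $\Z$ via Fact~\ref{thm:hier}(1). This is the honest continuous analogue of the dichotomy for $(\bar{\R},\lambda^{\Z},\mu^{\Z})$ of \cite[Theorem 1.3]{discrete}, with continuous rather than discrete irrationality conditions governing the definability count, and it is the step where I expect essentially all of the difficulty to concentrate --- consistent with the authors' own remark that the conjecture is most likely very hard.
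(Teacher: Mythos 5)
This statement is labeled a conjecture in the paper, and the authors state explicitly that it is open and ``most likely very hard to prove''; there is no proof in the paper to compare against. Your proposal is an honest strategic sketch, and you correctly identify where the real difficulty lies (ruling out ``mixed'' configurations), but several of the intermediate steps have concrete gaps.

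First, the o-minimal branch of the Miller--Speissegger trichotomy is vacuous here: a finitely generated infinite $G$ is countably infinite, and no o-minimal expansion of $\bar{\R}$ defines a countably infinite set, so $(\bar{\R},G)$ is never o-minimal for infinite $G$. The Peterzil--Pillay--Starchenko theorem applies to Lie subgroups and is not the relevant tool. Second, in the branch where $(\bar{\R},G)$ defines an infinite discrete subset of $\R$, you cannot simply ``use Corollary A to reduce to $\lambda^{\Z}$'': Corollary A requires the group itself to be discrete, but $G$ may define discrete sets without being discrete --- indeed the conjecture allows $(\bar{\R},S_\alpha)$ as open core precisely in this branch, and a finitely generated subgroup of $S_\alpha$ of rank $\geq 2$ is not discrete. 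Third, in the dense branch your reduction to virtually abelian is incomplete: the non-diagonalizable argument (Corollary~\ref{cor:diag}) does carry over since it only needs countability, but the exponential-growth argument (Proposition~\ref{prop:exp}) genuinely uses the separation constant $D = \min\{\|g-I\| : g\in\Gamma\}>0$, which fails for non-discrete $G$. Your proposed fix --- intersecting with a compact fundamental domain for the identity component of $\overline{G}$ to get a discrete slice --- is not a defined construction (there is no reason the resulting intersection is a group, or even a definable set of positive Assouad dimension), and it does not obviously salvage the lower bound on separation. Finally, the synthesis in the virtually abelian dense case, which you defer to Caulfield, van den Dries--G\"unayd\i n, and \cite{H-tau}, is exactly where the open problem is concentrated: those works handle special subgroups of $\C^\times$ or special diagonal two-parameter families, and there is currently no mechanism for producing, from an arbitrary finitely generated dense diagonalizable $G$, the dichotomy of power directions versus a single spiral direction. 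That last step is not a detail to be filled in; it is the conjecture.
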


Even when the statement ``$(\bar{\mathbb{R}},G)$ does not define $\Z$'' is replaced by  ``$(\bar{\mathbb{R}},G)$ does not interpret $(\mathbb{Z},+,\cdot)$'', the conjecture is open. However, this weaker conjecture might be easier to prove, because the Novosibirsk theorem can be used to rule out the case when $G$ is virtually solvable and non-virtually abelian. It is worth pointing out that Caulfield conjectured that when $G$ is assumed to be a subgroup of $\C^{\times}$, then the open core $(\bar{\mathbb{R}},G)$ is either $\bar{\R}$ or a reduct of $(\bar{\mathbb{R}},S_\alpha)$ for some  $\alpha \in \mathbb{R}_{>0}$. See \cite{Erin-paper, Erin-thesis} for progress towards this later conjecture.

\section{Notation and Conventions}
\noindent Throughout $m,n$ range over $\mathbb{N}$ and $k,l$ range over $\mathbb{Z}$, $G$ is a subgroup of Gl$_n(\mathbb{C})$, and $\Gamma$ is a discrete subgroup of Gl$_n(\mathbb{C})$. Let $\bar{\mathbb{R}}_\Gamma$ be the expansion of $\bar{\mathbb{R}}$ by a $(2n)^2$-ary predicate defining $\Gamma$.
We set $\bar{\mathbb{R}}_\lambda := \bar{\mathbb{R}}_{\lambda^{\Z}}$.
A subset of $\mathbb{R}^k$ is \textbf{discrete} if every point is isolated.
We let UT$_n(\mathbb{C})$ be the group of $n$-by-$n$ upper triangular matrices, D$_n(\mathbb{C})$ be the group of $n$-by-$n$ diagonal matrices, and $\mathbb{U}$ be the multiplicative group of complex numbers with norm one.\newline

All structures considered are first-order, ``definable''  means ``definable, possibly with parameters''.
Two expansions of $(\mathbb{R},<)$ are \textbf{interdefinable} if they define the same subsets of $\mathbb{R}^k$ for all $k$.
If P is a propety of groups then a group $H$ is \textbf{virtually P} if there is finite index subgroup $H'$ of $H$ that is P.

\section{Linear Groups}
\noindent We gather some general facts on groups.
Throughout this section $H$ is a finitely generated group with a symmetric set $S$ of generators.
Let $S_m$ be the set of $m$-fold products of elements of $S$ for all $m$.
If $S'$ is another symmetric set of generators then there is a constant $k \geq 1$ such that
$$ k^{-1} |S_m| \leq |S'_m| \leq k |S_m| \quad \text{for all } m.$$
Thus the growth rate of $m \mapsto | S_m |$ is an invariant of $H$.
We say $H$ has \textbf{exponential growth} if there is a $C \geq 1$ such that $|S_m| \geq C^m$ for all $m$ and $H$ has \textbf{polynomial growth} there are $k, t \in \mathbb{R}_{>0}$ such that $|S_m| \leq t m^k$ for all $m$.
Note finitely generated non-abelian free groups are of exponential growth.
Gromov's theorem~\cite{Gromov-poly} says $H$ has polynomial growth if and only if it is virtually nilpotent.
Gromov's theorem for subgroups of Gl$_n(\mathbb{C})$ is less difficult and may be proven using the following two theorems:

\begin{fact}\label{thm:tits}
If $G$ does not contain a non-abelian free subgroup, then $G$ is virtually solvable.
\end{fact}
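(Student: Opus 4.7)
The statement is the Tits alternative, a deep theorem whose full proof is substantial; I will sketch the strategy rather than give a self-contained argument. The plan breaks into three stages: reduction to the finitely generated case, reduction via Zariski closure to a semisimple algebraic setting, and a ping-pong construction of a free subgroup.

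First I would reduce to finitely generated subgroups. If every finitely generated $H \leq G$ is virtually solvable, then $G$ itself is virtually solvable: the derived length of a solvable subgroup of $\Gl_n(\mathbb{C})$ is bounded in terms of $n$ (for instance, via Lie--Kolchin), and taking Zariski closures of the directed family of finitely generated subgroups gives an ascending chain that must stabilize by Noetherianity, whence $G$ has the same derived length as one of its finitely generated subgroups. So assume $G$ is finitely generated and not virtually solvable, and consider its Zariski closure $\overline{G}$ in $\Gl_n(\mathbb{C})$. Then $\overline{G}$ is not virtually solvable either, so modding out by the solvable radical of the identity component leaves a nontrivial semisimple algebraic group. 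In particular $\overline{G}$ contains an algebraic subgroup isogenous to $\mathrm{SL}_2$, and by passing to a $G$-invariant finite-dimensional representation we may assume $G$ acts on some projective space $\mathbb{P}(V)$ with a Zariski-dense image whose closure contains such an $\mathrm{SL}_2$.

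The heart of the matter is to produce a \emph{proximal} element $g \in G$, that is, one whose action on $\mathbb{P}(V)$ has a unique attracting fixed point and contracts a complement of a unique repelling hyperplane into small neighborhoods under iteration. For this I would follow Tits and pass to a local field: the matrix entries of a finite generating set of $G$ lie in some finitely generated subfield $k$ of $\mathbb{C}$, and a standard lemma (using Zariski density, non-triviality of valuations on $k$, and the fact that an element of $\mathrm{SL}_2$ with distinct eigenvalues is diagonalizable) produces an embedding of $k$ into a local field $k_v$ under which some element of $G$ has a dominant eigenvalue strictly larger in absolute value than all others. That element, acting on $\mathbb{P}(V)_{k_v}$, is proximal.

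Given one proximal element $a$, conjugating by a generic element of the Zariski-dense $G$ yields a second proximal element $b$ whose attracting/repelling data is disjoint from that of $a$. Replacing $a,b$ by sufficiently high powers and applying the ping-pong lemma on $\mathbb{P}(V)_{k_v}$ with the two pairs of source/sink neighborhoods then shows $\langle a^N, b^N \rangle$ is free of rank two, contradicting the hypothesis. The main obstacle is the local-field step: one must actually exhibit a valuation on the finitely generated field $k$ whose restriction to the eigenvalues of some element of $G$ is non-trivial in the right way, and this is the technical core of Tits' argument.
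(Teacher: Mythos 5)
The paper does not prove this statement: it is Tits' alternative, and the text simply labels it as such and cites Tits' original paper \cite{tits}. So there is no in-paper proof to compare against. Given that, treating the statement as a black box is the right call in context, and your proposal is a reasonable high-level sketch of Tits' actual argument: reduction to the finitely generated case, passage to the Zariski closure and the semisimple quotient, production of a proximal element by embedding a finitely generated field of definition into a local field with a suitably placed valuation, and then ping-pong on projective space to extract a free subgroup. You are also right to flag that the local-field/valuation step is the technical heart of the proof and that a full treatment of it is well beyond what can be done in a couple of paragraphs.

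Two small points worth tightening if you ever expand this. First, in the reduction to the finitely generated case, the cleaner argument is to observe that the Zariski closures of finitely generated subgroups form an increasing net of algebraic subgroups, which stabilizes by Noetherianity; since a finitely generated subgroup that is virtually solvable has Zariski closure with solvable identity component, the closure $\overline{G}$ has solvable identity component $\overline{G}^\circ$, and $G \cap \overline{G}^\circ$ is then a finite-index solvable subgroup of $G$. Invoking a derived-length bound is not quite enough on its own, because you also need uniform control over the index of the solvable part. Second, after passing to the semisimple quotient you do not literally get a copy of $\mathrm{SL}_2$ inside $\overline{G}$ in general; what you get is a nontrivial semisimple group, and the relevant point is that it acts strongly irreducibly and proximally on some composition factor of a suitable tensor or wedge power of the standard representation. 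None of this affects the overall shape of the argument, which you have represented faithfully.
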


\noindent Fact~\ref{thm:tits} is \textbf{Tits' alternative}~\cite{tits}.
Fact~\ref{thm:bmw} is due to Milnor~\cite{Milnor-solv} and Wolf~\cite{Wolf-solv}.

\begin{fact}\label{thm:bmw}
Suppose $H$ is virtually solvable.
Then $H$ either has exponential or polynomial growth.
If the latter case holds then $H$ is virtually nilpotent.
\end{fact}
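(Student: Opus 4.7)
The plan is to prove the two halves of the statement in turn, using a key combinatorial lemma of Milnor that produces exponential growth from a non-finitely-generated conjugation orbit.

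First I would establish that every finitely generated nilpotent group has polynomial growth, by induction on the nilpotency class $c$. The abelian case reduces to $\mathbb{Z}^r \oplus F$ with $F$ finite, where balls of radius $m$ have size polynomial of degree $r$. For the inductive step with $Z = \gamma_c H$ central and finitely generated abelian, the quotient $H/Z$ has class $c-1$ and hence polynomial growth by induction. Using commutator identities such as $[x^k, y] \equiv [x,y]^k$ modulo deeper commutators, one can write every word of length $m$ as a product of a word of polynomial length in $H/Z$ with a central element of polynomial length, giving the Bass--Guivarc'h degree estimate $\sum_{i=1}^{c} i \cdot \mathrm{rank}(\gamma_i H / \gamma_{i+1} H)$.

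Next, the key Milnor lemma: if there exist $a, b \in H$ such that $\langle b^n a b^{-n} : n \in \mathbb{N} \rangle$ is not finitely generated, then $H$ has exponential growth. Indeed, for any sequence $\epsilon \in \{0,1\}^m$ the element $w(\epsilon) = \prod_{i=0}^{m-1} b^{i} a^{\epsilon_i} b^{-i}$ has word length $O(m^2)$ in $S = \{a,b,a^{-1},b^{-1}\}$, and distinct sequences must give distinct elements (otherwise one would obtain a relation showing the orbit is generated by a bounded initial segment of its defining elements). This yields $2^m$ elements of word length $O(m^2)$, so the growth is exponential.

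Now assume $H$ is virtually solvable with subexponential growth; passing to a finite-index subgroup, assume $H$ is solvable. Apply Milnor's lemma along the derived series: for each $a \in H^{(i)}$ and generator $b$ of $H$, the conjugation orbit must be finitely generated, so inductively each quotient $H^{(i)}/H^{(i+1)}$ is finitely generated abelian and $H$ is polycyclic. To upgrade polycyclic plus polynomial growth to virtually nilpotent, pass to a finite-index torsion-free subgroup and look at the abelianization $A$ of the deepest nontrivial derived subgroup tensored with $\mathbb{Q}$, on which a generator $b$ acts linearly. By Milnor's lemma no eigenvalue of this action can exceed $1$ in absolute value, and applying the lemma to both $b$ and $b^{-1}$ forces all eigenvalues to lie on the unit circle; a further finite-index passage then makes the action unipotent. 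Iterating this down the derived series gives a finite-index nilpotent subgroup, so $H$ is virtually nilpotent. The main obstacle, I expect, is this final step: one must carefully combine the eigenvalue analysis with the polycyclic structure, keep track of the multiple passages to finite-index subgroups so that the final nilpotent conclusion indeed holds for a finite-index subgroup of the original $H$, and argue that unit-circle eigenvalues of an integer matrix become $1$ after finite covers (e.g.\ via Kronecker's theorem on algebraic integers).
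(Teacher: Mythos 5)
The paper gives no proof of this statement: it is labeled a \emph{Fact}, attributed to Milnor and Wolf, and used as a black box. So your proposal cannot be measured against a proof in the paper; what you have sketched is essentially the standard Milnor--Wolf argument (Milnor: solvable of subexponential growth is polycyclic, via the conjugation-orbit lemma; Wolf: polycyclic of subexponential growth is virtually nilpotent, via the eigenvalue analysis of the action on abelian sections; Bass--Guivarc'h: nilpotent implies polynomial growth). The outline is correct in structure.

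There is, however, one genuine flaw in the central lemma as you state it. You claim $w(\epsilon) = \prod_{i=0}^{m-1} b^{i} a^{\epsilon_i} b^{-i}$ has word length $O(m^2)$ and then conclude exponential growth. That inference is invalid: $2^m$ distinct elements inside the ball of radius $cm^2$ only gives $\beta(n) \ge 2^{\sqrt{n/c}}$, which is superpolynomial but \emph{sub}exponential, and so would not contradict subexponential growth nor establish exponential growth. The argument is saved by noticing that the product telescopes:
\[
w(\epsilon) = a^{\epsilon_0}\, b\, a^{\epsilon_1}\, b\, a^{\epsilon_2}\, b \cdots b\, a^{\epsilon_{m-1}}\, b^{-(m-1)},
\]
so the word length is in fact at most $3m - 2 = O(m)$, and only then does $2^m$ distinct elements of length $O(m)$ yield genuine exponential growth. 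Without this observation the lemma as written does not do what you need it to do. Beyond that, the final Wolf step (forcing the eigenvalues of the conjugation action on $A \otimes \Q$ onto the unit circle, then invoking Kronecker's theorem and passing to a finite-index subgroup to make the action unipotent) is sketched at the right level of detail and points in the correct direction, though as you say yourself it is where the real work lies and would need to be carried out carefully to constitute a full proof.
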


\noindent Note Fact \ref{thm:tits} and Fact \ref{thm:bmw} imply every finitely generated subgroup of Gl$_n(\mathbb{C})$ is of polynomial or exponential growth.
This dichotomy famously does not hold for finitely generated groups in general, see for example~\cite{Grigorchuk}.

\medskip \noindent The \textbf{Heisenberg group} $\mathbb{H}$ is presented by generators $a,b,c$ and relations
$$ [a,b] = c, \quad ac = ca, \quad bc = cb. $$
The following fact is folklore; we include a proof for the reader.

\begin{fact}\label{fact:heisen}
Let $E$ be a nilpotent, torsion-free, and non-abelian group.
Then there is a subgroup of $E$ isomorphic to $\mathbb{H}$.
\end{fact}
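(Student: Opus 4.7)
The plan is to reduce to the case where $E$ has nilpotency class exactly $2$, then to verify that a pair of non-commuting elements together with their commutator generates a copy of $\mathbb{H}$.

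For the reduction, let $n \geq 2$ be the nilpotency class of $E$, so that $\gamma_n(E) \neq 1$ and $\gamma_{n+1}(E) = 1$, where $\gamma_{\bullet}$ denotes the lower central series. By definition $\gamma_n(E) = [\gamma_{n-1}(E), E]$, so we may choose $a \in \gamma_{n-1}(E)$ and $b \in E$ with $c := [a,b] \neq 1$. Then $c \in \gamma_n(E) \subseteq Z(E)$. Set $F := \langle a, b \rangle$. Since $c$ commutes with the generators $a, b$ of $F$, it lies in $Z(F)$; since $F'$ is the normal closure in $F$ of $[a,b]$, and conjugation in $F$ fixes the central element $c$, we conclude $F' = \langle c \rangle \leq Z(F)$. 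Hence $F$ is a torsion-free, non-abelian subgroup of $E$ that is nilpotent of class exactly $2$.

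I now claim that $F$ itself is isomorphic to $\mathbb{H}$. Define $\varphi \colon \mathbb{H} \to F$ by sending the Heisenberg generators $a, b, c$ to the corresponding elements of $F$. The three defining relations of $\mathbb{H}$ hold in $F$ by construction and by centrality of $c$, so $\varphi$ is a well-defined surjection onto $F$. For injectivity, recall that every element of $\mathbb{H}$ is uniquely of the form $a^i b^j c^k$ with $i, j, k \in \mathbb{Z}$. Suppose $a^i b^j c^k = 1$ in $F$; then $a^i b^j = c^{-k}$ lies in $Z(F)$, and using centrality of $c$ together with the identity $[x^m, y] = [x,y]^m$ valid when $[x,y]$ is central,
\[
1 = [a^i b^j, a] = [b^j, a] = c^{-j}, \qquad 1 = [a^i b^j, b] = [a^i, b] = c^i.
\]
Since $c \neq 1$ lies in the torsion-free group $E$, it has infinite order, so $i = j = 0$; then $c^k = 1$ forces $k = 0$.

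The main obstacle is the reduction step: the choice of $a$ from $\gamma_{n-1}(E)$ rather than from $E$ arbitrarily is what forces $c = [a,b]$ to land in $\gamma_n(E) \subseteq Z(E)$, and thereby forces $F$ to be class-$2$ nilpotent instead of inheriting an intermediate class. Without this, one would need to descend further inside the central series or appeal to structural facts about torsion-free nilpotent groups (for instance, torsion-freeness of central quotients). With it, the identification with $\mathbb{H}$ goes through using only standard commutator calculus and the fact that $F$, as a subgroup of $E$, is torsion-free.
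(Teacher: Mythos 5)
Your proof is correct and follows the same strategy as the paper's: pick $a$ in the penultimate term of the lower central series and $b$ not commuting with it, so that $c=[a,b]$ is central, then observe $a,b,c$ generate a Heisenberg group. You additionally spell out the injectivity of $\mathbb{H}\to\langle a,b\rangle$ via the normal form $a^i b^j c^k$ and commutator calculus, which the paper leaves implicit, and this detail is welcome since torsion-freeness of $E$ alone is what makes $c$ have infinite order and hence makes that verification go through.
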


\begin{proof}
Let $e$ be the identity element of $E$.
We define the lower central series $(E_k)_{k \in \mathbb{N}}$ of $E$ by declaring $E_0 = E$ and $E_{k} = [E_{k-1},E]$ for $k \geq 1$.
Nilpotency means there is an $m$ such that $E_m \neq \{e\}$ and $[E_m,E] = \{e\}$.
Moreover $m \geq 1$ as $E$ is not abelian.

On one hand, $[E_{m-1},E] = E_m \neq \{e\}$ and so $E_{m-1}$ is not contained in $Z(E)$.
Thus, there exists $a \in E_{m-1} \setminus Z(E)$ and $b \in E_m$ that does not commute with $a$.
On the other hand, $[E_m,E]=\{e\}$ implies $E_m$ is contained in the center $Z(E)$ of $E$ and is thus abelian. So, $c := [a,b]$ is an element of $Z(E)$ and commutes with both $a$ and $b$.

Finally, $a,b,c$ have infinite order because $E$ is torsion-free. So, $a,b,c$ generate a subgroup of $E$ isomorphic to the Heisenberg group.
\end{proof}

\subsection{Non-diagonalizable elements}
We show certain linear groups necessarily contain non-diagonalizable elements.

\begin{lem}\label{prop:heisen0}
If $G$ is nilpotent, torsion-free, and not abelian, then $G$ contains a non-diagonalizable element.
\end{lem}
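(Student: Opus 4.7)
The plan is to apply Fact~\ref{fact:heisen} to reduce to the case where $G$ contains a copy of the Heisenberg group, and then show that the central commutator generator of that copy is not diagonalizable. Concretely, I would fix a subgroup of $G$ isomorphic to $\mathbb{H}$, generated by matrices $a, b, c \in \Gl_n(\mathbb{C})$ satisfying $c = [a,b]$, $ac = ca$, and $bc = cb$. Since $G$ is torsion-free and $c$ has infinite order in $\mathbb{H}$, its image in $G$ has infinite order as well, so it suffices to exhibit this particular $c$ as a non-diagonalizable element of $G$.

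For the core step, I would argue by contradiction: assume $c$ is diagonalizable, with distinct eigenvalues $\mu_1, \dots, \mu_k$ and corresponding eigenspaces $V_1, \dots, V_k \subseteq \mathbb{C}^n$. Because $a$ and $b$ each commute with $c$, both preserve every $V_i$, and on $V_i$ the identity $c = aba^{-1}b^{-1}$ restricts to
\[
\mu_i \cdot I_{V_i} \;=\; (a|_{V_i})(b|_{V_i})(a|_{V_i})^{-1}(b|_{V_i})^{-1}.
\]
Taking determinants on $V_i$ forces $\mu_i^{\dim V_i} = 1$, so every eigenvalue of $c$ is a root of unity; combined with diagonalizability this makes $c$ a torsion element, contradicting torsion-freeness of $G$.

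There is no substantial obstacle: the argument collapses to the classical determinant-of-commutator trick applied on each eigenspace of the central element $c$. The only point that needs care is recording that the image of $c$ under the embedding $\mathbb{H} \hookrightarrow G$ really does have infinite order, which follows immediately from $c \neq 1$ in $\mathbb{H}$ together with torsion-freeness of $G$. The organizational work has already been done by Fact~\ref{fact:heisen}; once a Heisenberg subgroup is in hand, the linear-algebraic obstruction to diagonalizability is essentially automatic.
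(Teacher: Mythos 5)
Your proof is correct, and while it shares the paper's first step (invoking Fact~\ref{fact:heisen} to pass to a Heisenberg subgroup with generators $a,b,c$ and central commutator $c=[a,b]$), it replaces the paper's core argument with a genuinely different and cleaner one. The paper's Lemma~\ref{lem:heisen1} assumes \emph{both} $a$ and $c$ are diagonalizable, simultaneously diagonalizes them, and runs an induction showing $b^k v$ is an $a$-eigenvector with eigenvalue $\lambda_a\lambda_c^k$, so $a$ acquires infinitely many eigenvalues---a contradiction. It therefore concludes only that $a$ \emph{or} $c$ fails to be diagonalizable. Your determinant-of-commutator argument assumes only that $c$ is diagonalizable, decomposes $\mathbb{C}^n$ into $c$-eigenspaces $V_i$ (preserved by $a$ and $b$ since both commute with $c$), and observes that on each $V_i$ the scalar $\mu_i$ equals $\det\bigl((a|_{V_i})(b|_{V_i})(a|_{V_i})^{-1}(b|_{V_i})^{-1}\bigr)^{1/\dim V_i}$-worth of information---precisely $\mu_i^{\dim V_i}=1$, forcing each $\mu_i$ to be a root of unity and hence $c$ to be torsion, a contradiction. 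This is more elementary (no induction, no counting eigenvalues), needs a weaker hypothesis, and yields the sharper conclusion that $c$ itself is non-diagonalizable. Both routes are fully valid; yours is arguably the tidier proof of the lemma as stated.
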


\noindent Lemma~\ref{prop:heisen0} follows from  Fact~\ref{fact:heisen} above and Lemma~\ref{lem:heisen1} below.

\begin{lem}\label{lem:heisen1}
Suppose $a,b,c \in \text{Gl}_n(\mathbb{C})$ satisfy
$$ [a,b] = c, \quad ac = ca, \quad bc  =cb, $$
and $c$ is not torsion.
Then either $a$ or $c$ is not diagonalizable.
\end{lem}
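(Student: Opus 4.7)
The plan is to prove the stronger statement that $c$ itself must fail to be diagonalizable, making no use of any hypothesis on $a$. The whole argument will hinge on one structural observation: because $c$ commutes with both $a$ and $b$, the eigenspaces of $c$ are invariant under $a$ and $b$, so restricting the commutator relation to each eigenspace reduces the problem to the case where $c$ acts as a scalar.

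Assume for contradiction that $c$ is diagonalizable, and write $\mathbb{C}^n = \bigoplus_{\mu} V_\mu$ as the direct sum of its $c$-eigenspaces, where $c$ acts on $V_\mu$ as $\mu \cdot I_{V_\mu}$. Since $a$ and $b$ preserve each $V_\mu$, their restrictions $a_\mu, b_\mu$ are invertible linear maps on $V_\mu$, and the relation $[a,b] = c$ restricts to $[a_\mu, b_\mu] = \mu \cdot I_{V_\mu}$. Taking determinants on $V_\mu$: the left-hand side equals $1$ because any commutator of invertible operators has determinant one, while the right-hand side equals $\mu^{\dim V_\mu}$. Hence every eigenvalue $\mu$ of $c$ is a root of unity, with its order dividing $\dim V_\mu$.

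Since $c$ is diagonalizable with every eigenvalue a root of unity, $c$ has finite order (its order divides the least common multiple of the dimensions $\dim V_\mu$), contradicting the hypothesis that $c$ is not torsion. This will force $c$ to be non-diagonalizable, which is stronger than the stated conclusion. I do not anticipate any serious obstacle in carrying this out; the entire content of the argument is the observation that a commutator acting as a scalar on a subspace forces that scalar to be a root of unity, which together with diagonalizability globalizes to finite order of $c$.
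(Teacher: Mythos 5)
Your proof is correct and takes a genuinely different — and cleaner — route than the paper's, and it proves something strictly stronger. The paper assumes for contradiction that \emph{both} $a$ and $c$ are diagonalizable, simultaneously diagonalizes them to get a common eigenvector $v$ with $c$-eigenvalue $\lambda_c$ not a root of unity, and then shows by induction that $a(b^k v) = (\lambda_a \lambda_c^k)(b^k v)$ for all $k \geq 1$, contradicting the finiteness of the spectrum of $a$. Your argument instead uses only the diagonalizability of $c$: decompose $\mathbb{C}^n$ into $c$-eigenspaces $V_\mu$, observe that $a$ and $b$ preserve each $V_\mu$ because they commute with $c$, restrict $[a,b]=c$ to get $[a_\mu, b_\mu] = \mu\, I_{V_\mu}$, and take determinants to conclude $\mu^{\dim V_\mu} = 1$; since every eigenvalue of a diagonalizable $c$ is then a root of unity, $c$ has finite order, contradicting the hypothesis. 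The determinant step is sound — the group commutator of invertible operators on a finite-dimensional space always has determinant $1$. What your approach buys: it drops any hypothesis on $a$, replaces the inductive eigenvector computation with a one-line determinant identity, and yields the sharper conclusion that $c$ itself is non-diagonalizable, which still suffices to derive Lemma~\ref{prop:heisen0} from Fact~\ref{fact:heisen}.
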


\begin{proof}
Suppose $a,c$ are both diagonalizable.
As $a,c$ commute, they are simultaneously diagonalizable and share a basis $\mathfrak{B}$ of eigenvectors.
As $c$ is not torsion, there is $\lambda_c \in \mathbb{C}^\times$ which is not a root of unity and $v \in \mathfrak{B}$ such that $cv = \lambda_c v$.
 Let $\lambda_a \in \mathbb{C}^\times$ be such that $a v = \lambda_a v$.

By way of contradiction, we will show $a (b^{k} v)  = (\lambda_a \lambda_c^k)(b^{k}v)$ for all $k \geq 1$.
As $\lambda_c$ is not a root of unity, this implies $a$ has infinitely many eigenvalues, which is impossible for an $n \times n$ matrix.
The base case holds as
$$ a(bv) = bac v = (\lambda_a \lambda_c) (bv).$$
Let $k \geq 2$ and suppose $a (b^{k-1} v) = (\lambda_a \lambda_c^{k-1}) (b^{k-1} v)$.
As $c$ commutes with $b$,
$$ a(b^{k} v) = ab( b^{k-1} v) = bac( b^{k-1} v) = bab^{k-1}cv = (\lambda_c) (bab^{k-1}v). $$
Applying the inductive assumption,
\begin{align*}
(\lambda_c) (bab^{k-1}v) &= (\lambda_c) b ( \lambda_a \lambda_c^{k-1} b^{k-1} v) = (\lambda_a \lambda_c^{k}) (b^{k} v).
 \end{align*}
\end{proof}

\noindent We now prove a slight weakening of Lemma~\ref{prop:heisen0} for solvable groups.
Recall $a \in \text{Gl}_n(\mathbb{C})$ is \textbf{unipotent} if some conjugate of $a$ is upper triangular with every diagonal entry equal to one.
The only diagonalizable unipotent matrix is the identity.
We recall a theorem of Mal\textquotesingle tsev \cite{Mal-sol}.

\begin{fact}\label{thm:lkm}
Suppose $G$ is solvable.
Then there is a finite index subgroup $G'$ of $G$ such that $G'$ is conjugate to a subgroup of UT$_n(\mathbb{C})$.
\end{fact}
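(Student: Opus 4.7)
The plan is to deduce this from the classical Lie--Kolchin theorem via a Zariski-closure argument, which is in fact the standard route to Mal\textquotesingle tsev's result.

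First I would pass to the Zariski closure $\bar{G}$ of $G$ inside $\Gl_n(\C)$. A standard fact about linear groups is that the Zariski closure of a solvable subgroup of $\Gl_n(\C)$ is again solvable; concretely, if $G^{(k)}=\{e\}$ is the $k$-th term of the derived series, then by continuity of the commutator map in the Zariski topology one shows by induction that $\bar G^{(k)}\subseteq \overline{G^{(k)}}=\{e\}$, so $\bar G$ is solvable. Thus $\bar G$ is a solvable linear algebraic group over $\C$.

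Next I would invoke the fact that any linear algebraic group has only finitely many irreducible (= connected) components, and the identity component $\bar G^{\circ}$ is a closed normal subgroup of finite index in $\bar G$. Now I apply the Lie--Kolchin theorem: a connected solvable linear algebraic subgroup of $\Gl_n(\C)$ admits a common eigenvector, and by inductively splitting off such eigenvectors one finds $P\in \Gl_n(\C)$ such that $P\bar G^{\circ}P^{-1}\subseteq \textup{UT}_n(\C)$.

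Finally, I would set $G':=G\cap \bar G^{\circ}$. Since $\bar G^{\circ}$ has finite index in $\bar G$ and $G\subseteq \bar G$, the subgroup $G'$ has finite index in $G$, and $PG'P^{-1}\subseteq P\bar G^{\circ}P^{-1}\subseteq \textup{UT}_n(\C)$, as required.

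The main obstacle, if one wants a genuinely self-contained treatment, is the Lie--Kolchin theorem itself; everything else (solvability of the Zariski closure and finiteness of the component group of an algebraic group) is comparatively formal. Since the statement is being used as a black-box cited result of Mal\textquotesingle tsev, one could instead appeal directly to \cite{Mal-sol}, but the Zariski-closure + Lie--Kolchin strategy above is the natural modern packaging and is what I would write down if forced to give a proof.
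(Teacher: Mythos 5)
Your argument is correct, and it is the standard modern proof of this result. Note, though, that the paper itself gives no proof of Fact~\ref{thm:lkm}: it simply cites it as a theorem of Mal\textquotesingle tsev (\cite{Mal-sol}) and moves on, using it as a black box in the derivation of Lemma~\ref{prop:solv-1}. So there is nothing in the paper to compare your proof against.

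As for the proof itself, the Zariski-closure reduction plus Lie--Kolchin is exactly the right packaging. The three ingredients you isolate are all standard and correctly deployed: (i) the Zariski closure of a solvable subgroup of $\Gl_n(\C)$ is solvable, via the observation that, since the closure of a subgroup is a subgroup and the commutator map is a morphism, one gets $[\bar A,\bar B]\subseteq\overline{[A,B]}$ and hence $\bar G^{(k)}\subseteq\overline{G^{(k)}}$ by induction; (ii) a linear algebraic group has finitely many connected components, so $\bar G^{\circ}$ is a closed normal subgroup of finite index; (iii) Lie--Kolchin triangularizes a connected solvable linear algebraic group over $\C$. Then $G':=G\cap\bar G^{\circ}$ is a finite-index subgroup of $G$ conjugate into $\text{UT}_n(\C)$. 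The one step you leave somewhat implicit in (i) --- that the Zariski closure of a subgroup is itself a subgroup, which is needed to make sense of $\bar G^{(k)}$ as a derived series of groups and to conclude $[\bar A,\bar B]\subseteq\overline{[A,B]}$ from the pointwise containment of commutators --- is true and standard, but worth stating if you want a self-contained write-up.
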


\noindent We now derive an easy corollary from Fact \ref{thm:lkm}

\begin{lem}\label{prop:solv-1}
Suppose $G$ is solvable and not virtually abelian.
Then $G$ contains a non-diagonalizable element.
\end{lem}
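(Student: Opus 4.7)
The plan is to prove the contrapositive: assume every element of $G$ is diagonalizable, and deduce that $G$ is virtually abelian. Since diagonalizability is preserved under conjugation and inherited by subgroups, replacing $G$ by the finite index subgroup $G'$ supplied by Fact~\ref{thm:lkm} and then conjugating, we may assume $G' \subseteq \text{UT}_n(\C)$ and that every element of $G'$ is diagonalizable.

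The key observation is that the diagonal projection
$$ \pi : \text{UT}_n(\C) \longrightarrow \text{D}_n(\C) $$
is a group homomorphism, since the diagonal of a product of upper triangular matrices is the product of the diagonals. Its kernel consists precisely of the unipotent upper triangular matrices (those with all diagonal entries equal to $1$). I would then invoke the standard fact that a unipotent matrix $u \in \text{Gl}_n(\C)$ is diagonalizable if and only if $u = I$: any diagonalization of a unipotent matrix must have all eigenvalues equal to $1$, forcing the matrix itself to be the identity. Since every element of $G'$ is diagonalizable by assumption, the kernel of $\pi|_{G'}$ is trivial, so $\pi$ embeds $G'$ into $\text{D}_n(\C)$.

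Since $\text{D}_n(\C)$ is abelian, $G'$ is abelian, and therefore $G$ is virtually abelian, contradicting the hypothesis. This forces $G$ to contain some non-diagonalizable element. There is no serious obstacle in this argument; the whole content lies in combining Mal\textquotesingle tsev's theorem with the elementary observation that a diagonalizable unipotent matrix must be the identity. It is perhaps worth noting that this argument does not require the torsion-free hypothesis that appeared in Lemma~\ref{prop:heisen0}, because the appeal to Fact~\ref{thm:lkm} absorbs the finite-index issues that torsion would otherwise cause.
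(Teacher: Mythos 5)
Your proof is correct and is essentially identical to the paper's: both pass to a finite index subgroup conjugated into $\text{UT}_n(\mathbb{C})$ via Mal\textquotesingle tsev's theorem (Fact~\ref{thm:lkm}), use the diagonal projection $\text{UT}_n(\mathbb{C}) \to \text{D}_n(\mathbb{C})$ as a homomorphism with unipotent kernel, and conclude from the fact that the only diagonalizable unipotent matrix is the identity. The paper states these steps slightly more tersely, but the argument is the same.
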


\begin{proof}
Suppose every element of $G$ is diagonalizable.
After applying Fact \ref{thm:lkm} and making a change of basis if necessary we suppose $G' = G \cap \text{UT}_n(\mathbb{C})$ has finite index in $G$.
Let $\rho : \text{UT}_n(\mathbb{C}) \to \text{D}_n(\mathbb{C})$ be the natural quotient map; that is the restriction to the diagonal.
Every element of the kernel of $\rho$ is unipotent.
Thus the restriction of $\rho$ to $G'$ is injective, and so $G'$ is abelian.
\end{proof}

\section{Non-diagonalizable matrices}

\begin{lem}\label{lem:diag}
Suppose $G$ contains a non-diagonalizable matrix.
Then there is a rational function $h$ on $\text{Gl}_n(\mathbb{C}) \times \text{Gl}_{n}(\mathbb{C})$ such that $h(G \times G) \subseteq \mathbb{C}$ is dense in $\mathbb{R}_{>0}$.
\end{lem}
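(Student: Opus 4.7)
The plan is to pick a non-diagonalizable $g \in G$, isolate a $2\times 2$ Jordan block via a pair of vectors $(v,w)$, and then design $h$ so that evaluation at $(g^k, g^l)$ returns the rational number $k/l$. Since $\mathbb{Q}_{>0}$ is already dense in $\mathbb{R}_{>0}$, that is enough.

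First I would fix a non-diagonalizable $g \in G$. Because $g$ is invertible and non-diagonalizable, its minimal polynomial has a repeated root $\lambda \in \mathbb{C}^{\times}$. Picking $v \in \ker(g - \lambda I)^2 \setminus \ker(g - \lambda I)$ and setting $w := (g - \lambda I) v$ yields linearly independent vectors satisfying $w \neq 0$, $gw = \lambda w$, and $gv = \lambda v + w$. A routine induction, together with the computation $g^{-1} v = \lambda^{-1} v - \lambda^{-2} w$ to start the negative-exponent case, gives
$$ g^k v = \lambda^k v + k \lambda^{k-1} w \quad \text{for every } k \in \mathbb{Z}. $$

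Next I would fix linear functionals $\phi, \psi \in (\mathbb{C}^n)^{*}$ with $\phi(v) = \psi(w) = 1$ and $\phi(w) = \psi(v) = 0$ (possible by linear independence of $v, w$), and define
$$ h(X, Y) := \frac{\psi(Xv)\,\phi(Yv)}{\phi(Xv)\,\psi(Yv)}. $$
With $v$ held fixed, the maps $X \mapsto \phi(Xv)$ and $X \mapsto \psi(Xv)$ are degree-$1$ polynomials in the entries of $X$, so $h$ is a rational function on $\mathrm{Gl}_n(\mathbb{C}) \times \mathrm{Gl}_n(\mathbb{C})$. Applying the formula for $g^k v$ gives $\phi(g^k v) = \lambda^k$ and $\psi(g^k v) = k \lambda^{k-1}$, hence all powers of $\lambda$ cancel and
$$ h(g^k, g^l) = \frac{k\lambda^{k-1}\cdot \lambda^l}{\lambda^k\cdot l\lambda^{l-1}} = \frac{k}{l} \quad \text{for every } k \in \mathbb{Z}, \ l \in \mathbb{Z}\setminus\{0\}. $$
Therefore $h(G \times G) \supseteq \mathbb{Q}$, whose positive part is dense in $\mathbb{R}_{>0}$.

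I do not expect a real obstacle. The only point requiring care is the need for two arguments: a single-matrix rational function built from this Jordan pair would produce values of the form $k/\lambda$, i.e.\ a coset of $\mathbb{Z}$, which is not dense in $\mathbb{R}_{>0}$ in general. Using a quotient of evaluations at $g^k$ and $g^l$ is precisely what cancels $\lambda$ and converts this into $\mathbb{Q}$, and it is what motivates the lemma being stated on $\mathrm{Gl}_n(\mathbb{C}) \times \mathrm{Gl}_n(\mathbb{C})$ rather than on $\mathrm{Gl}_n(\mathbb{C})$ alone.
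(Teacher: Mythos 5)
Your proof is correct and takes essentially the same approach as the paper: both extract from a non-diagonalizable element a $2\times 2$ "Jordan pair" whose $k$th power produces the values $\lambda^k$ and $k\lambda^{k-1}$, and then form a quotient of two such evaluations at $(g^k,g^l)$ to cancel $\lambda$ and land on $k/l \in \mathbb{Q}$. The only difference is presentational — you work coordinate-free with a generalized eigenvector $v$, the eigenvector $w = (g-\lambda I)v$, and dual functionals $\phi,\psi$, whereas the paper conjugates to Jordan form via some $b$ and reads off the $(0,1)$ and $(1,1)$ entries of $ba^kb^{-1}$; these are the same computation in different clothes.
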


\begin{proof}
Suppose $a \in G$ is non-diagonalizable.
Let $b \in \text{Gl}_n(\mathbb{C})$ be such that $bab^{-1}$ is in Jordan form, i.e.
$$ bab^{-1} = \begin{pmatrix}  A_1 & O & \dots & O \\[4pt] O & A_2 & \ldots & O \\[4pt] \vdots &  \vdots & \ddots & \vdots \\[4pt]
O & O & \ldots & A_l \end{pmatrix} $$
where each $A_i$ is a Jordan block and each $O$ is a zero matrix of the appropriate dimensions.
We have
$$ b a^k b^{-1} = \begin{pmatrix}  A^k_1 & O & \dots & O \\[4pt] O & A^k_2 & \ldots & O \\[4pt] \vdots &  \vdots & \ddots & \vdots \\[4pt]
O & O & \ldots & A^k_l \end{pmatrix} \quad \text{for all } k. $$
As $a$ is not diagonalizable, $A_k$ has more then one entry for some $k$.
We suppose $A_1$ is $m$-by-$m$ with $m \geq 2$.
For some $\lambda \in \mathbb{C}^\times$ we have
$$ A_1 = \begin{pmatrix} \lambda & 1 & 0 & \ldots & 0 & 0 \\[4pt] 0 & \lambda & 1 & \ldots &  0 & 0 \\[4pt] 0 & 0 & \lambda & \ldots & 0 & 0 \\[4pt] \vdots & \vdots & \vdots & \ddots & \vdots & \vdots \\[4pt] 0 & 0 & 0 & \ldots & \lambda  & 1 \\ 0 & 0 & 0  & \ldots & 0 & \lambda \\
  \end{pmatrix}. $$
It is well-known and easy to show by induction that for every $k \geq 1$:
  $$ A^k_{1} = \begin{pmatrix}
\lambda^k & \binom{k}{1} \lambda^{k-1} & \binom{k}{2} \lambda^{k-2} & \binom{k}{3} \lambda^{k-3} & \binom{k}{4} \lambda^{k-4} & \ldots & \binom{k}{m} \lambda^{k-m+1} \\[4pt]
0 & \lambda^k & \binom{k}{1} \lambda^{k-1} & \binom{k}{2} \lambda^{k-2} &  \binom{k}{3} \lambda^{k-3} & \ldots & \binom{k}{m - 1} \lambda^{k-m +2}\\[4pt]
0 & 0 & \lambda^k & \binom{k}{1} \lambda^{k-1} & \binom{k}{2} \lambda^{k-2} & \ldots & \binom{k}{m - 2} \lambda^{k - m +3} \\[4pt]
0 & 0 & 0 & \lambda^{k} & \binom{k}{1} \lambda^{k-1} & \ldots & \binom{k}{m - 3} \lambda^{k - m +4} \\[4pt]
0 & 0 & 0 & 0 & \lambda^k & \ldots & \binom{k}{m - 4} \lambda^{k - m + 5} \\[4pt]
\vdots & \vdots & \vdots & \vdots & \vdots & \ddots & \vdots \\
0 & 0 & 0 & \vdots & \vdots & \lambda^k  & \binom{k}{1} \lambda^{k-1} \\[4pt]
0 & 0 & 0  & \ldots & \ldots & 0 & \lambda^k \\
  \end{pmatrix}. $$
  Let $g_{ij}$ be the $(i,j)$-entry of $g \in \text{Gl}_n(\mathbb{C})$.
Thus, for each $k \geq 1$,
$$(ba^{k}b^{-1})_{01} = k\lambda^{k-1} \quad \text{and} \quad (ba^{k}b^{-1})_{11} = \lambda^{k}.$$
We define a rational function $h'$ on $\Gl_n(\mathbb{C}) \times \Gl_n(\mathbb{C})$ by declaring
$$ h'(g,g') := \frac{ g_{01} g'_{11} }{ g'_{01} g_{11} } $$
for all $g,g' \in \Gl_n(\mathbb{C})$ such that $g_{11}, g'_{01} \neq 0$.
Then define $h$ by declaring
$$h(g,g') := h'(bgb^{-1}, bg'b^{-1})$$
We have
$$ h(a^i,a^j) = \frac{ (i \lambda^{i-1})(\lambda^j) }{ ( j \lambda^{j-1} ) (\lambda^i) } = \frac{i}{j} \quad \text{for all } i,j \geq 1.$$
Thus $\Q_{>0}$ is a subset of the image of $G \times G$ under $h$.
\end{proof}

\begin{cor}\label{cor:diag}
If $\Gamma$ contains a non-diagonalizable matrix, then $\bar{\mathbb{R}}_\Gamma$ defines $\Z$.
In particular, if $\Gamma$ is either
\begin{itemize}
\item solvable and not virtually abelian, or
\item torsion-free, nilpotent and non-abelian,
\end{itemize}
then $\bar{\mathbb{R}}_\Gamma$ defines $\Z$.
\end{cor}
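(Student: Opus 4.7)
The plan is to combine Lemma~\ref{lem:diag} with Fact~\ref{thm:hier}(1), and then deduce the two bullet points from Lemmas~\ref{prop:heisen0} and~\ref{prop:solv-1}. The key observation is that although Lemma~\ref{lem:diag} produces a rational function $h$ whose image $h(\Gamma \times \Gamma)$ is dense in $\R_{>0}$, the discreteness of $\Gamma$ ensures that this image, viewed as a subset of $\R$, is also countable and hence co-dense.

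First I would note that $\Gamma$, as a discrete subset of $\Gl_n(\C) \subseteq M_n(\C) \cong \R^{2n^2}$, is countable. Let $a \in \Gamma$ be non-diagonalizable and let $h$ be the rational function produced by (the proof of) Lemma~\ref{lem:diag}. The set
\[
 S := h(\Gamma \times \Gamma) \cap \R
\]
is definable in $\bar{\R}_\Gamma$, since $h$ is a rational function in the real and imaginary parts of the matrix entries (with parameters from the Jordan-basis matrix $b$) and we are intersecting with the real line. Since $a^i, a^j \in \Gamma$ for all $i,j \geq 1$, the proof of Lemma~\ref{lem:diag} shows $\Q_{>0} \subseteq S$, so $S$ is dense in the open interval $(0,\infty)$. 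On the other hand $S$ is countable (as the image of a countable set under a function), hence co-dense in every open interval.

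Now $\Gamma$ is a discrete subset of $\R^{2n^2}$, so Fact~\ref{thm:hier}(1) applies with $D = \Gamma$ and the definable dense and co-dense subset $S$ of the interval $(0,\infty)$; this yields that $\bar{\R}_\Gamma$ defines $\Z$, proving the main clause. For the ``in particular'' statements: if $\Gamma$ is solvable and not virtually abelian, Lemma~\ref{prop:solv-1} produces a non-diagonalizable element in $\Gamma$; if $\Gamma$ is torsion-free, nilpotent and non-abelian, Lemma~\ref{prop:heisen0} does the same. In both cases the main clause applies.

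No step here presents a real obstacle; the substantive content is already packaged in Lemma~\ref{lem:diag} and Fact~\ref{thm:hier}(1). The only point that needs a moment of care is the verification that $S$ is honestly definable in $\bar{\R}_\Gamma$ (the rational function $h$ is defined over $\C$ via the conjugating matrix $b$, but since parameters are permitted this is routine) and that the countability of $\Gamma$ forces $S$ to be co-dense, which is what lets us invoke Fact~\ref{thm:hier}(1) rather than needing to construct a co-dense witness by hand.
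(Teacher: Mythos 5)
Your argument is essentially identical to the paper's: apply Lemma~\ref{lem:diag} to get a definable set dense in $\R_{>0}$, use countability of the discrete group $\Gamma$ to get co-density, invoke Fact~\ref{thm:hier}(1), and then dispatch the two bullets via Lemmas~\ref{prop:solv-1} and~\ref{prop:heisen0}. Your explicit care about intersecting $h(\Gamma\times\Gamma)$ with $\R$ and about the definability of $h$ (with the conjugating matrix $b$ as a parameter) is a sound, if minor, tightening of the paper's wording.
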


\begin{proof}
Applying Lemma~\ref{lem:diag}, suppose $h$ is a rational function on Gl$_n(\mathbb{C}) \times \text{Gl}_n(\mathbb{C})$ such that the image of $\Gamma \times \Gamma$ under $h$ is dense in $\mathbb{R}_{>0}$.
Note $\Gamma$ is countable as $\Gamma$ is discrete.
It follows that the image of $\Gamma \times \Gamma$ under any function is co-dense in $\mathbb{R}_{>0}$. Fact~\ref{thm:hier}(1) implies that $\bar{\mathbb{R}}_\Gamma$ defines $\Z$. The second claim follows from the first by applying Lemma \ref{prop:heisen0} and Lemma \ref{prop:solv-1}.
\end{proof}

\begin{cor}\label{cor:cyclic}
If $a \in \text{Gl}_n(\mathbb{C})$ is non-diagonalizable, then $(\bar{\mathbb{R}}, \{ a^k : k \in \mathbb{Z}\})$ defines $\Z$.
\end{cor}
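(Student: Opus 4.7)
The corollary is the special case of Corollary~\ref{cor:diag} in which $\Gamma$ is the cyclic subgroup generated by $a$, except that the hypothesis of Corollary~\ref{cor:diag} demands that $\Gamma$ be discrete (so that it qualifies as a ``$\Gamma$'' in the paper's conventions), and nothing yet says $\{a^k : k\in\Z\}$ is discrete. The plan is therefore to verify discreteness of $\Gamma := \{a^k : k\in\Z\}$ as a subset of $M_n(\C) \cong \R^{2n^2}$, and then invoke Corollary~\ref{cor:diag}.

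I would first note that $a$ has infinite order: any torsion element of $\Gl_n(\C)$ is annihilated by some $x^N-1$, a polynomial with simple roots over $\C$, and is therefore diagonalizable. Hence $\Gamma$ is infinite. Next I would recycle the Jordan-form setup from the proof of Lemma~\ref{lem:diag}: choose $b \in \Gl_n(\C)$ with $bab^{-1}$ in Jordan form, and isolate a Jordan block of size $m \geq 2$ corresponding to some eigenvalue $\lambda \in \C^\times$, so that the diagonal entry of the block in $ba^k b^{-1}$ is $\lambda^k$ and the superdiagonal entry is $k\lambda^{k-1}$.

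To show $\Gamma$ is discrete, I would assume toward contradiction that some sequence of pairwise distinct integers $k_n$ satisfies $a^{k_n} \to a^k$ for some $k \in \Z$. Replacing $k_n$ by $k_n - k$, I may assume $a^{k_n} \to I$ with $k_n \neq 0$ eventually. Applying conjugation by $b$ and inspecting the distinguished Jordan block gives $\lambda^{k_n} \to 1$ and $k_n \lambda^{k_n - 1} \to 0$. A short case split on $|\lambda|$ finishes the argument: if $|\lambda| \neq 1$, then $|\lambda|^{k_n} \to 1$ immediately forces $k_n \to 0$; if $|\lambda| = 1$, then $|k_n| = |k_n \lambda^{k_n - 1}| \to 0$ forces the same conclusion. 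Since the $k_n$ are integers, they must be eventually zero, contradicting the assumption.

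With $\Gamma$ shown to be a discrete subgroup of $\Gl_n(\C)$ containing the non-diagonalizable element $a$, Corollary~\ref{cor:diag} applies and yields that $\bar{\R}_\Gamma = (\bar{\R},\{a^k : k \in \Z\})$ defines $\Z$. The only real content lies in the discreteness verification, which is routine modulo the Jordan-form bookkeeping; I expect no genuine obstacle beyond keeping the case $|\lambda|=1$ separate, where the superdiagonal entry (rather than the diagonal one) is what drives $k_n$ to $0$.
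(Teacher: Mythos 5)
Your proposal is correct, but it proves the corollary by a genuinely different route from the paper. The paper does \emph{not} verify discreteness of $\{a^k : k \in \Z\}$; instead it observes that when $G$ is cyclic, the rational function $h$ built in Lemma~\ref{lem:diag} has $h(a^i,a^j) = i/j$ \emph{exactly}, so $h(G \times G) \cap \R_{>0}$ is precisely $\Q_{>0}$ (not merely dense in it). Hence $\Q$ is $(\bar{\R},G)$-definable, and the paper concludes by Julia Robinson's theorem that $\Z$ is definable in $(\Q,+,\cdot)$. Your approach instead shows $\Gamma = \{a^k : k \in \Z\}$ is automatically discrete --- a genuine observation the paper never makes --- and then reduces to Corollary~\ref{cor:diag}, whose proof runs through the dense/co-dense criterion of Fact~\ref{thm:hier}(1) rather than through Julia Robinson. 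Your discreteness verification is correct: the formula $(ba^k b^{-1})_{01} = k\lambda^{k-1}$ does hold for all $k \in \Z$, not just $k \geq 1$ (e.g.\ by the finite binomial expansion of $(\lambda I + N)^k$ with $N$ nilpotent), and your case split on $|\lambda|$ handles the unit-modulus case where the diagonal entry $\lambda^k$ gives no information. The two proofs use comparable machinery overall; the paper's buys a cleaner one-line argument by exploiting the exact identity $h(a^i,a^j)=i/j$, while yours has the side benefit of showing that the cyclic group generated by a non-diagonalizable matrix is always a legitimate $\Gamma$ in the paper's sense, so that Corollary~\ref{cor:cyclic} is literally subsumed by Corollary~\ref{cor:diag}.
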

\begin{proof}
Set $G :=  \{ a^k : k \in \mathbb{Z}\}$.
The proof of Lemma~\ref{lem:diag} shows that in this case $\Q_{>0}$ is the intersection of $h(G \times G)$ and $\mathbb{R}_{>0}$. Thus the corollary follows by Julia Robinson's classical theorem of definability of $\Z$ in $(\mathbb{Q},+,\cdot)$ in \cite{julia}.
\end{proof}

\section{The case of exponential growth}\label{section:exp}
\noindent We recall the \textbf{Assouad dimension} of a metric space $(X,d)$.
See Heinonen \cite{Heinonen} for more information.
The Assouad dimension of a subset $Y$ of $\mathbb{R}^k$ is the Assouad dimension of $Y$ equipped with the euclidean metric induced from $\mathbb{R}^k$.

\medskip \noindent Suppose $A \subseteq X$ has at least two elements.
Then $A$ is $\delta$-\textbf{separated} for $\delta \in \mathbb{R}_{>0}$ if $d(a, b) \geq \delta$ for all distinct $a,b \in A$, and $A$ is \textbf{seperated} if $A$ is $\delta$-seperated for some $\delta > 0$.
Let $\mathscr{S}(A) \in \mathbb{R}$ be the supremum of all $\delta \geq 0$ for which $A$ is $\delta$-seperated.
Let $\mathscr{D}(A)$ be the \textbf{diameter} of $A$; that is the infimum of all $\delta \in \mathbb{R} \cup \{ \infty \}$ such that $d(a,b) < \delta$ for all $a,b \in A$, and $A$ is \textbf{bounded} if $\mathscr{D}(A) < \infty$.
Note $\mathscr{S}(A) \leq \mathscr{D}(A)$.
The \textbf{Assouad dimension} of $(X,d)$ is the infimum of the set of  $\beta \in \mathbb{R}_{>0}$ for which there is a $C > 0$ such that
$$ |A| \leq C \left( \frac{ \mathscr{D}(A) }{ \mathscr{S}(A) } \right)^\beta \text{   for all bounded and separated } A \subseteq X. $$
The proof of Fact~\ref{fact:use-assouad} is an elementary computation which we leave to the reader.

\begin{fact}\label{fact:use-assouad}
Suppose there is a sequence $\{A_m\}_{m \in \mathbb{N}}$ of bounded separated subsets of $X$ with cardinality at least two, and $B,C,t > 1$ are such that
$$ |A_m| \geq C^m \quad \text{and} \quad  \frac{ \mathscr{D}(A_m) }{ \mathscr{S}(A_m) } \leq t B^m \quad \text{for all } m $$
then $(X,d)$ has positive Assouad dimension.
\end{fact}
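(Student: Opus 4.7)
The plan is to show that the Assouad dimension is in fact bounded below by $\log C/\log B$, a positive number since both $B$ and $C$ exceed one. The definition of Assouad dimension lets us translate this into a negative statement: for any $\beta \in (0,\log C/\log B)$, no constant $C' > 0$ can satisfy $|A| \leq C' (\mathscr{D}(A)/\mathscr{S}(A))^{\beta}$ simultaneously for every bounded, separated $A \subseteq X$ with at least two points. Once that is established, the infimum in the definition of Assouad dimension is at least $\beta$, and letting $\beta$ approach $\log C/\log B$ gives the desired bound.

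The sequence $\{A_m\}$ supplies the counterexample directly. First I would fix $\beta \in (0, \log C/\log B)$ and observe that the choice forces $\beta \log B < \log C$, equivalently $C/B^{\beta} > 1$. Applying the two given bounds to $A_m$ yields
\[
|A_m| \geq C^m \quad \text{and} \quad \left(\frac{\mathscr{D}(A_m)}{\mathscr{S}(A_m)}\right)^{\beta} \leq t^{\beta} B^{m\beta}.
\]
If some constant $C'$ made the Assouad inequality hold, then in particular $C^m \leq C' t^{\beta} B^{m\beta}$ for all $m$, that is, $(C/B^{\beta})^m \leq C' t^{\beta}$. Since $C/B^{\beta} > 1$, the left-hand side tends to infinity with $m$, while the right-hand side is a fixed constant. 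This contradiction rules out $\beta$ as an admissible exponent.

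Taking the supremum over $\beta < \log C/\log B$ then gives Assouad dimension at least $\log C/\log B > 0$. There is no genuine obstacle here beyond a short algebraic manipulation; the only point worth checking carefully is the equivalence $\beta < \log C/\log B \Leftrightarrow C/B^{\beta} > 1$, which is immediate upon taking logarithms. The hypothesis that each $A_m$ has at least two elements is used only to ensure $\mathscr{S}(A_m)$ and $\mathscr{D}(A_m)$ are both positive and finite, so that the ratio in the definition makes sense.
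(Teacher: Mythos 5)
Your proof is correct, and since the paper explicitly leaves this fact as "an elementary computation which we leave to the reader," your argument is essentially the intended one. You show the Assouad dimension is bounded below by $\log C/\log B > 0$: for any $\beta < \log C/\log B$ and any putative constant $C'$, applying the hypotheses to $A_m$ gives $(C/B^\beta)^m \le C' t^\beta$, which fails for large $m$ since $C/B^\beta > 1$. The logarithm equivalence you flag is fine because $B > 1$ guarantees $\log B > 0$. Nothing is missing.
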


\medskip \noindent Let $|v|$ be the usual euclidean norm of $v \in \mathbb{C}^n$.
Given $g \in \text{M}_n(\mathbb{C})$ we let
$$ \| g \|  =\inf \{ t \in \mathbb{R}_{>0} : | gv | \leq t |v| \text{   for all } v \in \mathbb{C}^n \} $$
be the operator norm of $g$.
Then $\|\text{}\|$ is a linear norm on M$_{n}(\mathbb{C})$ and satisfies $\| g h \| \leq \| g \| \| h \|$ for all $g,h \in \text{M}_{n}(\mathbb{C})$. As any two linear norms on M$_{n}(\mathbb{C})$ are bi-Lipschitz equivalent the metric induced by $\| \|$ is bi-Lipschitz equivalent to the usual euclidean metric on $\mathbb{R}^{n^2}$.

\begin{prop}\label{prop:exp}
Suppose $\Gamma$ contains a finitely generated subgroup $\Gamma'$ of exponential growth.
Then $\Gamma$ has positive Assouad dimension.
\end{prop}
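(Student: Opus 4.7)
The strategy is to apply Fact~\ref{fact:use-assouad} to the sequence $A_m := S_m$, where $S$ is a finite symmetric generating set for $\Gamma'$ and $S_m$ denotes the set of $m$-fold products from $S$. Exponential growth of $\Gamma'$ will provide the cardinality bound $|A_m| \geq C^m$, while submultiplicativity of the operator norm will give matching bounds on $\mathscr{D}(S_m)$ and $\mathscr{S}(S_m)$. Since the operator norm and the euclidean norm on $M_n(\C)$ are bi-Lipschitz equivalent and Assouad dimension is a bi-Lipschitz invariant, I may work throughout with $\|\cdot\|$.

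First I would arrange $M := \max_{s\in S}\|s\| > 1$. If every $s\in S$ had $\|s\|\le 1$, then (since $S=S^{-1}$) every $g\in\Gamma'$ would satisfy $\|g\|\le 1$ and $\|g^{-1}\|\le 1$, so $\Gamma' \subseteq K := \{g\in\Gl_n(\C): \|g\|,\|g^{-1}\|\le 1\}$. The set $K$ is compact: a sequence in $K$ has a subsequence $g_k$ converging in $M_n(\C)$, a further subsequence along which $g_k^{-1}$ also converges, and the product of the two limits equals $I$, forcing the first limit to lie in $\Gl_n(\C)$. This contradicts discreteness of the infinite group $\Gamma'$, so some element of $\Gamma'$ has operator norm $>1$; adjoining it (and its inverse) to $S$ preserves exponential growth.

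With $M>1$ secured, submultiplicativity of $\|\cdot\|$ gives $\|g\| \le M^m$ for all $g\in S_m$, and since $S$ is symmetric the same bound holds for $\|g^{-1}\|$. Hence $\mathscr{D}(S_m)\le 2M^m$. For separation, discreteness of $\Gamma$ in $\Gl_n(\C)$ supplies $\delta>0$ with $\|\gamma-I\|\ge\delta$ for every $\gamma\in\Gamma\setminus\{I\}$; for distinct $g,h\in S_m$, writing $g^{-1}h-I = g^{-1}(h-g)$ yields
\[\delta \le \|g^{-1}h-I\| \le \|g^{-1}\|\,\|h-g\| \le M^m\|h-g\|,\]
whence $\mathscr{S}(S_m)\ge \delta/M^m$. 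Therefore $\mathscr{D}(S_m)/\mathscr{S}(S_m) \le (2/\delta)(M^2)^m$, and combined with $|S_m|\ge C^m$ from exponential growth, Fact~\ref{fact:use-assouad} applied with $B := M^2$ and $t := \max(2/\delta, 2)$ shows that $\Gamma$ has positive Assouad dimension.

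The only non-mechanical step is the compactness argument needed to guarantee $M>1$; the remainder is standard manipulation of the submultiplicative norm together with the fact that discreteness at the identity, transported by $g \mapsto g^{-1}h$, controls the separation of $S_m$.
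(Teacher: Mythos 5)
Your proof is correct and follows essentially the same argument as the paper: apply Fact~\ref{fact:use-assouad} to the sets $S_m$, bounding $\mathscr{D}(S_m)$ from above by submultiplicativity of the operator norm and bounding $\mathscr{S}(S_m)$ from below by transporting the discreteness of $\Gamma$ at the identity via $g \mapsto g^{-1}h$, exactly as the paper does. The one addition is your preliminary compactness argument ensuring $M > 1$; the paper only records $B > 0$, which does not literally match the hypothesis $B > 1$ in Fact~\ref{fact:use-assouad}, so your extra step is a worthwhile bit of care (alternatively, one can observe that if $B^2 \leq 1$ the ratio $\mathscr{D}(S_m)/\mathscr{S}(S_m) \leq 2/D$ is bounded, so any base greater than $1$ together with $t := \max(2/D,2)$ serves).
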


\begin{proof}
Because Assouad dimension is a bi-Lipschitz invariant (see \cite{Heinonen}), it suffices to show that $\Gamma$ has positive Assouad dimension with respect to the metric induced by $\| \|$. 
We let $I$ be the $n$-by-$n$ identity matrix.
Let $S$ be a symmetric generating set of $\Gamma'$, and let $S_m$ be the set of $m$-fold products of elements of $S$ for $m \geq 2$.
Set $$ B := \max\{ \| g \| : g \in S\} \quad \text{and} \quad D := \min \{ \| g - I \| : g \in \Gamma \}. $$
Note that $D > 0$, as $\Gamma$ is discrete, and that $B > 0$, as $\Gamma \neq \{I\}$.
Induction shows that $\| g \| \leq B^m$ when $g \in S_m$.
The triangle inequality directly yields $\mathscr{D}(S_m) \leq 2B^m$.
Each $S_m$ is symmetric as $S$ is symmetric.
Therefore $\| g^{-1} \| \leq B^m$ for all $g \in S_m$.
Let $g, h \in \Gamma$. We have
$$ \| I - g^{-1} h \| \leq \| g^{-1} \| \| g - h \|. $$
Equivalently,
$$ \frac{ \| I - g^{-1} h \| }{ \| g^{-1} \| } \leq \| g - h \|. $$

\medskip\noindent Suppose $g,h \in S_m$ are distinct.
Then $g^{-1}h \neq I$, and hence $\| I - g^{-1}h \| \geq D$.
So
$$ \| g - h \| \geq \frac{ \| I - g^{-1}h \| }{ \| g^{-1} \| } \geq \frac{ D }{ B^m }. $$
Hence $\mathscr{S}(S_m) \geq D/B^m$.
Thus
$$ \frac{ \mathscr{D}(S_m)}{ \mathscr{S}(S_m) } \leq \frac{ 2B^m }{ D/B^m} = \frac{ 2 }{D } B^{2m} .$$
As $\Gamma'$ has exponential growth, there is a $C > 0$ such that $|S_m| \geq C^m$ for all $m$.
An application of Fact~\ref{fact:use-assouad} shows that $\Gamma$ has positive Assouad dimension.
\end{proof}

\begin{prop}\label{prop:solv-2}
Suppose $\Gamma$ is not virtually abelian. Then $\bar{\mathbb{R}}_\Gamma$ defines $\Z$.
\end{prop}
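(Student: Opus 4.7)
The plan is to split into two cases based on Tits' alternative (Fact~\ref{thm:tits}), and reduce each case to a tool that is already available: non-diagonalizable elements via Corollary~\ref{cor:diag} in the virtually solvable case, and positive Assouad dimension via Proposition~\ref{prop:exp} and Fact~\ref{thm:hier}(2) in the non-virtually-solvable case.

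First, suppose $\Gamma$ is virtually solvable. Then there is a finite-index subgroup $\Gamma' \leq \Gamma$ that is solvable. I claim $\Gamma'$ is not virtually abelian: any finite-index abelian subgroup of $\Gamma'$ would also have finite index in $\Gamma$, contradicting the hypothesis. Hence Lemma~\ref{prop:solv-1} applies to $\Gamma'$, producing a non-diagonalizable element in $\Gamma' \subseteq \Gamma$. Corollary~\ref{cor:diag} then gives that $\bar{\mathbb{R}}_\Gamma$ defines $\Z$.

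Second, suppose $\Gamma$ is not virtually solvable. By the contrapositive of Tits' alternative (Fact~\ref{thm:tits}), $\Gamma$ contains a non-abelian free subgroup $F$. Taking two non-commuting generators of $F$ produces a free subgroup of rank two, which is a finitely generated subgroup $\Gamma'$ of $\Gamma$ of exponential growth (a standard fact about free groups on two generators). Proposition~\ref{prop:exp} then tells us that $\Gamma$ has positive Assouad dimension, and Fact~\ref{thm:hier}(2) gives that $\bar{\mathbb{R}}_\Gamma$ defines $\Z$.

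There is no serious obstacle here; the main subtlety is just the routine observation in Case~1 that passing to a finite-index solvable subgroup preserves the property of not being virtually abelian, so that Lemma~\ref{prop:solv-1} is actually applicable. Everything else is an assembly of results already proved in the paper.
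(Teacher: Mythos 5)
Your proof is correct and follows essentially the same route as the paper: dichotomize on virtual solvability, handle the virtually solvable case through a non-diagonalizable element via Lemma~\ref{prop:solv-1} and Corollary~\ref{cor:diag}, and handle the remaining case through Tits' alternative (Fact~\ref{thm:tits}), Proposition~\ref{prop:exp}, and Fact~\ref{thm:hier}(2). In fact your Case~1 is \emph{cleaner} than the paper's printed proof, which reads ``By Corollary~\ref{cor:diag}, we can assume that $\Gamma$ is solvable'' and then applies Tits' alternative --- evidently a misprint, since what Tits' alternative requires is that $\Gamma$ be \emph{not virtually solvable}, and justifying that reduction from Corollary~\ref{cor:diag} is exactly the bookkeeping you spell out: pass to a finite-index solvable subgroup, observe it is still not virtually abelian (so Lemma~\ref{prop:solv-1} applies), and conclude that $\Gamma$ contains a non-diagonalizable element. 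Your Case~2 matches the paper's; the fact that a rank-two free group has exponential growth is already noted in Section~3, so nothing is missing.
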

\begin{proof}
By Corollary \ref{cor:diag}, we can assume that $\Gamma$ is solvable.
Thus by Fact \ref{thm:tits}, the group $\Gamma$ contains a non-abelian free subgroup. Therefore $\Gamma$ has positive Assouad dimension by Proposition~\ref{prop:exp}. We conclude that $\bar{\R}_{\Gamma}$ defines $\Z$ by Fact~\ref{thm:hier}(2). 
\end{proof}

\section{The virtually abelian case}
\noindent We first reduce the virtually abelian case to the abelian case.

\begin{lem}\label{lem:ab-1}
Suppose $G$ is  virtually abelian and every element of $G$ is diagonalizable.
Then there is a finite index abelian subgroup $G'$ of $G$ such that $(\bar{\mathbb{R}},G)$ and $(\bar{\mathbb{R}},G')$ are interdefinable.
\end{lem}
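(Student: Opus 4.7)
The plan is to take $G'$ to be a maximal abelian subgroup of $G$ containing some chosen finite-index abelian subgroup, and to exploit the simultaneous diagonalizability of $G'$ to realize it as a definable subset of $G$. Since $G$ is virtually abelian, fix an abelian subgroup $A_0 \leq G$ with $[G:A_0] < \infty$. By Zorn's lemma (the union of a chain of abelian subgroups is abelian), $A_0$ is contained in a maximal abelian subgroup $G'$ of $G$, and $G'$ inherits finite index in $G$ from $A_0$.

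Because every element of $G'$ is diagonalizable and the elements of $G'$ pairwise commute, $G'$ is simultaneously diagonalizable: there is $P \in \Gl_n(\mathbb{C})$ with $P G' P^{-1} \subseteq \text{D}_n(\mathbb{C})$. I then claim
$$ G' = \{ g \in G : P g P^{-1} \in \text{D}_n(\mathbb{C}) \}. $$
The inclusion $\subseteq$ is by construction; for $\supseteq$, if $g \in G$ has $P g P^{-1}$ diagonal, then $\langle G', g \rangle$ is a subgroup of $G$ whose $P$-conjugate lies in $\text{D}_n(\mathbb{C})$, hence is abelian and contains $G'$, so maximality forces $g \in G'$.

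The formula ``$P g P^{-1} \in \text{D}_n(\mathbb{C})$'' is a conjunction of polynomial equations in the entries of $g$ with coefficients determined by $P$ and $P^{-1}$, so $G'$ is definable in $(\bar{\mathbb{R}}, G)$ with the real entries of $P$ and $P^{-1}$ as parameters. For the reverse direction I pick coset representatives $g_1, \dots, g_k$ of $G'$ in $G$ and write $G = \bigsqcup_{i=1}^{k} g_i G'$; since each translate $g_i G'$ is the image of $G'$ under the polynomial map $h \mapsto g_i h$, the set $G$ is recovered in $(\bar{\mathbb{R}}, G')$ using $g_1, \dots, g_k$ as parameters, yielding the interdefinability.

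The main technical point I expect to pin down carefully is the simultaneous diagonalization step for an a priori infinite commuting family of diagonalizable matrices; this is standard but should be argued by induction on $n$ by decomposing $\mathbb{C}^n$ into eigenspaces of a single element of $G'$, on which the remaining elements of $G'$ restrict to a commuting family of diagonalizable operators. Aside from that, the argument consists of routine polynomial manipulations and elementary facts about finite-index cosets.
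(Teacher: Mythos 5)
Your proof is correct and takes essentially the same approach as the paper: exploit simultaneous diagonalizability of a finite-index abelian subgroup to realize it as $G \cap P^{-1}\text{D}_n(\mathbb{C})P$, which is definable, and then recover $G$ from $G'$ via finitely many coset representatives. The one place you take a detour is in constructing $G'$: you invoke Zorn's lemma to pass from $A_0$ to a maximal abelian subgroup, and then use maximality to show $G' = \{g \in G : PgP^{-1} \in \text{D}_n(\mathbb{C})\}$. The paper skips this by simply \emph{defining} $G'$ to be $G \cap g^{-1}\text{D}_n(\mathbb{C})g$ for a conjugating matrix $g$ that diagonalizes the original finite-index abelian subgroup $G''$; this set is automatically a subgroup, automatically abelian (as a subgroup of a conjugate of $\text{D}_n(\mathbb{C})$), automatically definable, and automatically of finite index since it contains $G''$. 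That slight rearrangement avoids Zorn's lemma and the maximality argument entirely, but the idea and the resulting conclusion are the same. Your remaining details (definability of the diagonality condition with $P, P^{-1}$ as parameters; coset decomposition of $G$; inductive argument for simultaneous diagonalization of a commuting family of diagonalizable operators) are all fine.
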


\begin{proof}
Let $G''$ be a finite index abelian subgroup of $G$.
As every element of $G''$ is diagonalizable, $G''$ is simultaneously diagonalizable.
Fix $g \in \text{Gl}_n(\mathbb{C})$ such that $gag^{-1}$ is diagonal for all $a \in G''$.
Let $G'$ be the set of $a \in G$ such that $gag^{-1}$ is diagonal, i.e. $G'$ is the intersection of $G$ and $g^{-1} \text{D}_n(\mathbb{C}) g$.
Then $G'$ is abelian, $(\bar{\mathbb{R}},G)$-definable, and is of finite index in $G$ as $G'' \subseteq G'$.
Because $G'$ has finite index in $G$, we have
$$ G = g_1 G' \cup \ldots \cup g_m G' \text{  for some  } g_1,\ldots,g_m \in G.$$
So $G$ is $(\bar{\mathbb{R}},G')$-definable.
\end{proof}

Proposition~\ref{prop:lastab} finishes the proof of Theorem A.

\begin{prop}\label{prop:lastab}
Suppose $\Gamma$ is abelian and $\bar{\mathbb{R}}_\Gamma$ does not define $\Z$.
Then there is $\lambda \in \R_{>0}$ such that $\bar{\mathbb{R}}_\Gamma$ is interdefinable with $\bar{\mathbb{R}}_\lambda$.
\end{prop}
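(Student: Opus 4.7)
The plan is as follows. Applying Corollary~\ref{cor:diag} to the hypothesis that $\bar{\mathbb{R}}_\Gamma$ does not define $\mathbb{Z}$, every element of $\Gamma$ must be diagonalizable, and since $\Gamma$ is abelian, its elements are simultaneously diagonalizable: there is $h \in \Gl_n(\mathbb{C})$ with $h\Gamma h^{-1} \subseteq \text{D}_n(\mathbb{C})$. Conjugation by $h$ is a polynomial map on the real and imaginary coordinates of $\Gl_n(\mathbb{C})$, so $\bar{\mathbb{R}}_\Gamma$ and $\bar{\mathbb{R}}_{h\Gamma h^{-1}}$ are interdefinable; replacing $\Gamma$ by its conjugate, I assume $\Gamma \subseteq \text{D}_n(\mathbb{C})$ and identify $\text{D}_n(\mathbb{C})$ with $(\mathbb{C}^\times)^n$ via diagonal entries. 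Since discrete subgroups of locally compact groups are closed, the continuous log-modulus map $L : (\mathbb{C}^\times)^n \to \mathbb{R}^n$, with compact kernel $\mathbb{U}^n$, sends $\Gamma$ onto a discrete subgroup of $\mathbb{R}^n$ and has finite restricted-kernel $\Gamma \cap \mathbb{U}^n$. Hence $\Gamma$ is finitely generated of the form $\mathbb{Z}^r \oplus F$ with $F$ a finite group whose components are roots of unity.

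Next I analyze the coordinate projections $\pi_k(\Gamma) \subseteq \mathbb{C}^\times$. Both $|\pi_k(\Gamma)| \subseteq \mathbb{R}_{>0}$ and $\arg(\pi_k(\Gamma)) \subseteq \mathbb{U}$ are $\bar{\mathbb{R}}_\Gamma$-definable subgroups. If $|\pi_k(\Gamma)|$ had rank $\geq 2$, then its $\log$-image would be a finitely generated subgroup of $\mathbb{R}$ of rank $\geq 2$, hence not cyclic, hence not discrete, hence dense in $\mathbb{R}$; being countable, it would be dense and co-dense in every open interval, forcing $\bar{\mathbb{R}}_\Gamma$ to define $\mathbb{Z}$ by Fact~\ref{thm:hier}(1). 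So $|\pi_k(\Gamma)| = \mu_k^{\mathbb{Z}}$ for some $\mu_k \in \mathbb{R}_{>0}$. Similarly, if $\arg(\pi_k(\Gamma))$ contained a non-torsion element $\eta$, then $\eta^{\mathbb{Z}}$ would be dense in $\mathbb{U}$, so the set of real parts gives a dense and co-dense subset of $[-1,1]$, again forcing $\bar{\mathbb{R}}_\Gamma$ to define $\mathbb{Z}$. Hence every element of $\pi_k(\Gamma)$ has the form $\mu\zeta$ with $\mu \in \mu_k^{\mathbb{Z}}$ and $\zeta$ a root of unity. Since $\bar{\mathbb{R}}_\Gamma$ defines each $\mu_k^{\mathbb{Z}}$, applying \cite[Theorem~1.3]{discrete} inductively to the pairs $(\mu_k^{\mathbb{Z}}, \mu_l^{\mathbb{Z}})$ forces every ratio $\log\mu_k / \log\mu_l$ to be rational (else $\bar{\mathbb{R}}_\Gamma$ defines $\mathbb{Z}$). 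Therefore the multiplicative subgroup of $\mathbb{R}_{>0}$ generated by $\mu_1, \ldots, \mu_n$ has rank at most one and equals $\lambda^{\mathbb{Z}}$ for some $\lambda \in \mathbb{R}_{>0}$ (taking $\lambda = 1$ if the group is trivial), and $(\bar{\mathbb{R}}, \mu_1^{\mathbb{Z}}, \ldots, \mu_n^{\mathbb{Z}})$ is interdefinable with $\bar{\mathbb{R}}_\lambda$.

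For the final interdefinability of $\bar{\mathbb{R}}_\Gamma$ with $\bar{\mathbb{R}}_\lambda$, one direction is immediate: $\bar{\mathbb{R}}_\Gamma$ defines each $\mu_k^{\mathbb{Z}}$, and hence defines $\lambda^{\mathbb{Z}}$ as the subgroup they generate. For the converse, fix generators $g_1, \ldots, g_r$ of the free part of $\Gamma$ and the finite torsion subgroup $F$ as parameters; it suffices to define each cyclic group $\langle g_i\rangle$ in $\bar{\mathbb{R}}_\lambda$, since $\Gamma$ is the pointwise product of these cyclic groups and $F$. Write $g_i^{(k)} = \lambda^{N_{ik}} \eta_{ik}$ with $\eta_{ik}$ a root of unity of order $o_{ik}$. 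Parameterizing $a \in \mathbb{Z}$ by $z := \lambda^a \in \lambda^{\mathbb{Z}}$, the modulus part of the $k$-th coordinate of $g_i^a$ becomes $z^{N_{ik}}$, an $\bar{\mathbb{R}}$-definable function of $z$, while the root-of-unity part $\eta_{ik}^a$ depends only on $a \bmod o_{ik}$, equivalently only on the coset of $z$ in the finite $\bar{\mathbb{R}}_\lambda$-definable quotient $\lambda^{\mathbb{Z}} / \lambda^{o_{ik}\mathbb{Z}}$. Taking the union over the finitely many coset-pieces and combining across $k$ expresses $\langle g_i\rangle$ as an $\bar{\mathbb{R}}_\lambda$-definable set.

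I expect the main obstacle to be the converse direction above: the modulus computation is routine, but the key point is that the residue $a \bmod o_{ik}$ can be recovered from $z = \lambda^a$ through the finite coset structure of $\lambda^{\mathbb{Z}}/\lambda^{o_{ik}\mathbb{Z}}$, and that the conditions on different coordinates have to be consistently synchronized by the single parameter $z$.
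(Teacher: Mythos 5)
Your proof is correct and follows the same overall strategy as the paper's: use Corollary~\ref{cor:diag} to diagonalize, identify $\Gamma$ with a discrete subgroup of $(\mathbb{C}^\times)^n$, show via Fact~\ref{thm:hier}(1) that each modulus projection is cyclic and each argument projection is finite, invoke \cite[Theorem~1.3]{discrete} to synchronize the bases into a single $\lambda$, and then establish that $\Gamma$ is $\bar{\mathbb{R}}_\lambda$-definable. Two points of divergence are worth noting. First, you derive up front that $\Gamma$ is finitely generated, $\Gamma\cong\mathbb{Z}^r\oplus F$ with $F$ finite, by observing that the log-modulus map has compact kernel $\mathbb{U}^n$ and therefore carries the discrete group $\Gamma$ to a discrete subgroup of $\mathbb{R}^n$; the paper gets this only implicitly at the end, once it has placed $\Gamma$ inside $\mathbb{U}_m^n\times(\lambda^{\mathbb{Z}})^n$. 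You then use finite generation to conclude that the argument projections are finite (finitely generated torsion), whereas the paper gets finiteness directly from definability. Second, for the converse direction the paper reduces to the clean structural Lemma~\ref{lem:group} (every subgroup of $(\mathbb{Z}/m\mathbb{Z})^l\times\mathbb{Z}^n$ is $(\mathbb{Z},+)$-definable), while you carry out the same computation by hand: decompose $\Gamma$ as the pointwise product of $F$ and cyclic factors $\langle g_i\rangle$, parameterize powers by $z=\lambda^a$, express the modulus parts as rational functions of $z$, and recover the root-of-unity parts from the coset of $z$ in $\lambda^{\mathbb{Z}}/\lambda^{o\mathbb{Z}}$. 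The two finalizations are mathematically equivalent; the paper's lemma isolates the argument into a reusable statement, while yours is more explicit and self-contained. Your proof has one small edge case to handle explicitly: when some $\mu_k=1$, the base-change $\log_{\mu_k}\mu_l$ is undefined, so one should phrase the conclusion of \cite[Theorem~1.3]{discrete} as ``the group generated by the $\mu_k$ is cyclic'' rather than in terms of pairwise log-ratios.
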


\noindent Let $\mathfrak{u}: \mathbb{C}^\times \to \mathbb{U}$ be the argument map and $| \text{ } | : \mathbb{C}^\times \to \mathbb{R}_{>0}$ be the absolute value map. Thus $z = \mathfrak{u}(z)|z|$ for all $z \in \mathbb{C}^\times$.
Let $\mathbb{U}_m$ be the group of $m$th roots of unity for all $m \geq 1$.  In the following proof of Proposition \ref{prop:lastab} we will use the immediate corollary of \cite[Theorem 1.3]{discrete} that the structure $(\bar{\R},\lambda^{\Z},\mu^{\Z})$ defines $\Z$ whenever $\log_{\lambda} \mu \notin \mathbb{Q}$, and is is interdefinable with $(\bar{\R},\lambda^{\Z})$ otherwise.

\begin{proof}
Fact~\ref{thm:hier}(1) implies every countable $\bar{\mathbb{R}}_\Gamma$-definable subset of $\mathbb{R}$ is nowhere dense.
It follows that every $\bar{\mathbb{R}}_\Gamma$-definable countable subgroup of $\mathbb{U}$ is finite and every $\bar{\mathbb{R}}_\Gamma$-definable countable subgroup of $(\mathbb{R}_{>0},\cdot)$ is of the form $\lambda^{\mathbb{Z}}$ for some $\lambda \in \mathbb{R}_{>0}$.

Every element of $\Gamma$ is diagonalizable by Corollary~\ref{cor:diag}. Thus $\Gamma$ is simultaneously diagonalizable.
After making a change of basis we suppose $\Gamma$ is a subgroup of D$_n(\mathbb{C})$.
We identify D$_n(\mathbb{C})$ with $(\mathbb{C}^\times)^n$.
Let $\Gamma_i$ be the image of $\Gamma$ under the projection $(\mathbb{C}^\times)^n \to \mathbb{C}^\times$ onto the $i$th cordinate for $1 \leq i \leq n$.

Each $\mathfrak{u}(\Gamma_i)$ is finite.
Fix an $m$ such that $\mathfrak{u}(\Gamma_i)$ is a subgroup of $\mathbb{U}_m$ for all $1 \leq i \leq n$.
For each $1 \leq i \leq n$, $|\Gamma_i|$ is a discrete subgroup of $\mathbb{R}_{>0}$ and is thus equal to $\alpha_i^{\mathbb{Z}}$ for some $\alpha_i \in \mathbb{R}_{>0}$.
By \cite[Theorem 1.3]{discrete} each $\alpha_i$ is a rational power of $\alpha_1$.
Let $\lambda \in \mathbb{R}_{>0}$ be a rational power of $\alpha_1$ such that each $\alpha_i$ is an integer power of $\lambda$.
We show $\bar{\mathbb{R}}_\Gamma$ and $\bar{\mathbb{R}}_\lambda$ are interdefinable.
Note that $\lambda^{\mathbb{Z}}$ is $\bar{\mathbb{R}}_\Gamma$-definable; so it suffices to show $\Gamma$ is $\bar{\mathbb{R}}_\lambda$-definable.

Every element of $\Gamma_i$ is of the form $\sigma \lambda^k$ for some $\sigma \in \mathbb{U}_m$ and $k \in \mathbb{Z}$.
Thus $\Gamma$ is a subgroup of
$$ \Gamma' =  \left\{ \begin{pmatrix}  \sigma_1 \lambda^{k_1} & 0 & \dots & 0 \\[4pt] 0 & \sigma_2 \lambda^{k_2} & \ldots & 0 \\[4pt] \vdots &  \vdots & \ddots & \vdots \\[4pt]
0 & 0 & \ldots & \sigma_n \lambda^{k_n} \end{pmatrix} : \sigma_1, \ldots,\sigma_n \in \mathbb{U}_m, k_1,\ldots,k_n \in \mathbb{Z} \right\}. $$
Note $\Gamma'$ is $\bar{\mathbb{R}}_\lambda$-definable.
Abusing notation we let $\mathfrak{u} : (\mathbb{C}^\times)^n \to \mathbb{U}^n$ and we let $| \text{ } | : (\mathbb{C}^\times)^n \to (\mathbb{R}_{>0})^n$ be given by
$$ \mathfrak{u}(z_1,\ldots,z_n) = (\mathfrak{u}(z_1),\ldots,\mathfrak{u}(z_n)) \quad \text{and} \quad |(z_1,\ldots,z_n)| = (|z_1|,\ldots,|z_n|). $$
Then the map $(\mathbb{C}^\times)^n \to \mathbb{U}^{n} \times (\mathbb{R}_{>0})^n$ given by $\bar{z} \mapsto (\mathfrak{u}(\bar{z}), |\bar{z}|)$ restricts to a $\bar{\mathbb{R}}_\lambda$-definable isomorphism between $\Gamma'$ and $\mathbb{U}^n_m \times (\lambda^{\mathbb{Z}})^n$.
Lemma~\ref{lem:group} below implies any subgroup of $\mathbb{U}^n_m \times (\lambda^{\mathbb{Z}})^n$ is $\bar{\mathbb{R}}_\lambda$-definable.
\end{proof}

\noindent We consider $(\mathbb{Z}/m\mathbb{Z},+)$ to be a group with underlying set $\{0,\ldots,m-1\}$ in the usual way so that $(\mathbb{Z}/m\mathbb{Z},+)$ is a $(\mathbb{Z},+)$-definable group.
Lemma~\ref{lem:group} is folklore.
We include a proof for the sake of completeness.

\begin{lem}\label{lem:group}
Every subgroup $H$ of $(\mathbb{Z}/m\mathbb{Z})^l \times \mathbb{Z}^n$ for $l \geq 0$ is $(\mathbb{Z},+)$-definable.
\end{lem}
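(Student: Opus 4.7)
The plan is to lift $H$ from the quotient $(\mathbb{Z}/m\mathbb{Z})^l \times \mathbb{Z}^n$ up to $\mathbb{Z}^{l+n}$, use the fact that subgroups of $\mathbb{Z}^{l+n}$ are finitely generated, and then write down an explicit existential formula. Concretely, let $\pi : \mathbb{Z}^{l+n} \to (\mathbb{Z}/m\mathbb{Z})^l \times \mathbb{Z}^n$ be the natural surjection (reduction mod $m$ in the first $l$ coordinates, identity in the last $n$) and set $\tilde H := \pi^{-1}(H)$. Then $\tilde H$ is a subgroup of the free abelian group $\mathbb{Z}^{l+n}$, hence free abelian of rank at most $l+n$; in particular $\tilde H$ is finitely generated. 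Under the standard identification of the underlying set of $(\mathbb{Z}/m\mathbb{Z})^l \times \mathbb{Z}^n$ with $\{0,\ldots,m-1\}^l \times \mathbb{Z}^n \subseteq \mathbb{Z}^{l+n}$, a direct check shows
$$ H \;=\; \tilde H \cap \bigl(\{0,1,\ldots,m-1\}^l \times \mathbb{Z}^n\bigr). $$

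Next I would show that $\tilde H$ itself is $(\mathbb{Z},+)$-definable with parameters. Fix generators $v_1,\ldots,v_r \in \mathbb{Z}^{l+n}$ of $\tilde H$ and write $v_j = (v_{j,1},\ldots,v_{j,l+n})$. Then
$$ \tilde H \;=\; \bigl\{\, c_1 v_1 + \cdots + c_r v_r \,:\, c_1,\ldots,c_r \in \mathbb{Z}\,\bigr\} $$
is cut out by the formula
$$ \varphi(x_1,\ldots,x_{l+n}) \;:=\; \exists c_1,\ldots,c_r\; \bigwedge_{i=1}^{l+n} x_i \;=\; c_1\,v_{1,i} + \cdots + c_r\,v_{r,i}, $$
where each expression $c_j\,v_{j,i}$ is just a shorthand for the Presburger term $\underbrace{c_j + \cdots + c_j}_{|v_{j,i}|}$ (with an added minus sign when $v_{j,i} < 0$), legitimate because the coefficients $v_{j,i}$ are fixed integers. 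Using the entries of $v_1,\ldots,v_r$ as parameters, $\varphi$ is a genuine $(\mathbb{Z},+)$-formula.

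To finish, I would note that the finite set $\{0,1,\ldots,m-1\}^l \times \mathbb{Z}^n$ is $(\mathbb{Z},+)$-definable as the finite conjunction of disjunctions $\bigwedge_{i\le l}\bigvee_{k=0}^{m-1}(x_i = k)$, again allowed as parameters. Intersecting this with the zero set of $\varphi$ gives a $(\mathbb{Z},+)$-definable description of $H$.

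There is no real obstacle; the whole argument is a routine combination of the Noetherianity of $\mathbb{Z}^{l+n}$ as a $\mathbb{Z}$-module with the standard fact that multiplication by a fixed integer is a Presburger term. The only points requiring care are selecting the finitely many generators of $\tilde H$ (so that the defining formula has finite length) and verifying that the multiplications appearing in the formula involve only fixed integer coefficients rather than a genuine product of two variables.
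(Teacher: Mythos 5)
Your proposal is correct and follows essentially the same route as the paper: lift $H$ to its preimage in $\mathbb{Z}^{l+n}$ under the natural quotient, use the fact that subgroups of $\mathbb{Z}^{l+n}$ are finitely generated, and write down an explicit Presburger formula with fixed integer coefficients. The only cosmetic difference is that you recover $H$ by intersecting $\tilde H$ with the set of standard representatives $\{0,\ldots,m-1\}^l\times\mathbb{Z}^n$, whereas the paper simply notes it suffices to define the preimage (since the quotient map itself is $(\mathbb{Z},+)$-definable) — two phrasings of the same step.
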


\begin{proof}
We first reduce to the case $l = 0$.
The quotient map $\mathbb{Z} \to \mathbb{Z}/m\mathbb{Z}$ is $(\mathbb{Z},+)$-definable, it follows that the coordinate-wise quotient $\mathbb{Z}^{l} \times \mathbb{Z}^{n} \to (\mathbb{Z}/m\mathbb{Z})^l \times \mathbb{Z}^n$ is $(\mathbb{Z},+)$-definable.
It suffices to show the preimage of $H$ in $\mathbb{Z}^{l + n}$ is $(\mathbb{Z},+)$-definable.

Suppose $H$ is a subgroup of $\mathbb{Z}^n$.
Then $H$ is finitely generated with generators $\beta_1,\ldots,\beta_k$ where
$ \beta_i = (b^{i}_1,\ldots,b^{i}_n) \text{  for all } 1 \leq i \leq k. $
Then
\begin{align*}
 H & = \left\{ \sum_{i = 1}^{k} c_i \beta_i  : c_1,\ldots,c_k \in \mathbb{Z}   \right\}  \\[4pt]
& = \left\{ \left( \sum_{i = 1}^{k} c_i b^{i}_{1} , \ldots , \sum_{i = 1}^{k} c_i b^{i}_n \right) : c_1,\ldots, c_n  \in \mathbb{Z}   \right\}.
\end{align*}
Thus $H$ is $(\mathbb{Z},+)$-definable.
\end{proof}

\section{countable $(\mathscr{R},\lambda^{\mathbb{Z}})$-definable groups}
\noindent Fix $\lambda \in \mathbb{R}_{>0}$ and an o-minimal $\mathscr{R}$ with  field of exponents $\Q$. Since $(\mathscr{R},\lambda^{\Z})$ does not define $\Z$ by \cite[Theorem 3.4.2]{Miller-tame}, Theorem~A implies every $(\mathscr{R},\lambda^{\mathbb{Z}})$-definable discrete subgroup of $\Gl_n(\mathbb{C})$ is virtually abelian. We extend this result to all countable interpretable groups.

\begin{prop}\label{prop:lam-group}
Every countable $(\mathscr{R},\lambda^{\mathbb{Z}})$-interpretable group is virtually abelian.
\end{prop}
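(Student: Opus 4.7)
The plan is to reduce the proposition to Theorem~A, by showing that any countable interpretable group $G$ which is not virtually abelian would force $\mathbb{Z}$ to be definable in $(\mathscr{R},\lambda^{\mathbb{Z}})$, contradicting the $d$-minimality invoked at the start of the section.

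First, I would pass from interpretability to definability. Writing $G = Y/E$ with $Y$ and $E$ definable, I would choose a definable set $X\subseteq \mathbb{R}^k$ of representatives, endowed with the transferred definable group operation and inverse. Since $G$ is countable, $X$ is a countable definable subset of $\mathbb{R}^k$; by $d$-minimality of $(\mathscr{R},\lambda^{\mathbb{Z}})$, every such set is discrete. So after this step we may assume $G$ is a $(\mathscr{R},\lambda^{\mathbb{Z}})$-definable discrete group sitting in $\mathbb{R}^k$.

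The main step is to realize $X$, with its definable group operation, as a discrete subgroup of some $\Gl_N(\mathbb{C})$ via a $(\mathscr{R},\lambda^{\mathbb{Z}})$-definable faithful homomorphism. The guiding intuition, paralleling the analysis in the proof of Proposition~\ref{prop:lastab}, is that $(\mathscr{R},\lambda^{\mathbb{Z}})$-definable countable discrete groups should be built from copies of $\lambda^{\mathbb{Z}}$ together with finite cyclic groups, and such objects embed canonically as discrete subgroups of diagonal matrices in some $\Gl_N(\mathbb{C})$. Once this embedding is in hand, the image $\Gamma$ is a discrete $(\mathscr{R},\lambda^{\mathbb{Z}})$-definable subgroup of $\Gl_N(\mathbb{C})$; if $G$, and hence $\Gamma$, were not virtually abelian, Theorem~A would give that $(\bar{\mathbb{R}},\Gamma)$ — a reduct of $(\mathscr{R},\lambda^{\mathbb{Z}})$ — defines $\mathbb{Z}$, the desired contradiction.

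The hard part will be the linear realization step: abstract countable groups need not be linear, so the argument really has to use the specific structure of countable $(\mathscr{R},\lambda^{\mathbb{Z}})$-definable groups on discrete sets rather than any general construction. If an explicit linear embedding proves inaccessible, a natural fallback is a direct Assouad-dimension argument in the spirit of Proposition~\ref{prop:exp}: given a finitely generated non-virtually-abelian subgroup $G_0$ of $G$ realized inside $X\subseteq \mathbb{R}^k$, bound the diameters and separations of the $m$-fold products of a symmetric generating set directly from the definability of the group operation and the discreteness of $X$, then apply Fact~\ref{thm:hier}(2). The obstacle this runs into is the absence of a submultiplicativity analogous to $\|gh\|\leq \|g\|\|h\|$, which was crucial in the proof of Proposition~\ref{prop:exp}; a substitute would have to be extracted from the specific $(\mathscr{R},\lambda^{\mathbb{Z}})$-definable form of the group operation $\mu$.
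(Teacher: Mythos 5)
Your proposal has a genuine gap, and you've already put your finger on where it is: the ``main step'' of realizing a countable $(\mathscr{R},\lambda^{\mathbb{Z}})$-definable discrete group as a discrete subgroup of some $\Gl_N(\mathbb{C})$ via a definable faithful homomorphism is not carried out, and it is exactly the crux. Your guiding intuition --- that such groups should be ``built from copies of $\lambda^{\mathbb{Z}}$ together with finite cyclic groups'' --- is in the right spirit but does not by itself produce an embedding into a linear group, and your fallback Assouad-dimension argument also has an acknowledged hole (no substitute for submultiplicativity of the operator norm). As written, neither route closes.

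The paper goes in the opposite direction: instead of trying to embed the group ``up'' into $\Gl_N(\mathbb{C})$ and invoke Theorem~A, it transfers the group ``down'' into Presburger arithmetic and invokes a theorem about that setting. Concretely: after using Miller's definable selection for d-minimal structures to pass from interpretable to definable, it applies two results of Tychonievich. First, every countable $(\mathscr{R},\lambda^{\mathbb{Z}})$-definable $X\subseteq\mathbb{R}^k$ is the image of an $\bar{\mathbb{R}}_\lambda$-definable surjection $(\lambda^{\mathbb{Z}})^m\to X$; second, every $(\mathscr{R},\lambda^{\mathbb{Z}})$-definable subset of $(\lambda^{\mathbb{Z}})^m$ is already definable in $(\lambda^{\mathbb{Z}},<,\cdot)\cong(\mathbb{Z},<,+)$. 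Together these show that every countable $(\mathscr{R},\lambda^{\mathbb{Z}})$-definable group is isomorphic to a $(\mathbb{Z},<,+)$-definable group. Then the theorem of Onshuus and Vicaria that every Presburger-definable group is virtually abelian finishes the argument. Note in particular that Theorem~A is not used in this proof at all; the key content is the Tychonievich transfer plus the Presburger group-classification, which is precisely the linear-realization input you were hoping to synthesize but couldn't. Without some analogue of those transfer results, your approach cannot be completed.
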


\noindent Proposition~\ref{prop:lam-group} follows directly from several previous results.
 Every d-minimal expansion of $\bar{\mathbb{R}}$ admits definable selection by Miller~\cite{Miller-selection}.
Therefore an $(\mathscr{R},\lambda^{\mathbb{Z}})$-interpretable group is isomorphic to an $(\mathscr{R},\lambda^{\mathbb{Z}})$-definable group.
We now recall two results of Tychonievich.
The first is a special case of \cite[4.1.10]{Tychon-thesis}.

\begin{fact}\label{prop:ty}
If $X \subseteq \mathbb{R}^k$ is $(\mathscr{R},\lambda^{\mathbb{Z}})$-definable and countable, then there is an $\bar{\mathbb{R}}_\lambda$-definable surjection $f : (\lambda^\mathbb{Z})^m \to X$ for some $m$.
\end{fact}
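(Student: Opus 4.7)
The plan is to reduce this to a normal-form (or cone decomposition) theorem for definable sets in $(\mathscr{R},\lambda^{\mathbb{Z}})$, and then exploit the countability of $X$ to strip away the genuinely o-minimal ``padding'' and descend to an $\bar{\mathbb{R}}_\lambda$-definable surjection. This is the content of Tychonievich's thesis; I will sketch the shape I expect such a proof to take.

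First, I would establish a structure theorem for $(\mathscr{R},\lambda^{\mathbb{Z}})$-definable sets in the spirit of the Miller--Tychonievich work on d-minimal expansions: every $(\mathscr{R},\lambda^{\mathbb{Z}})$-definable set $X\subseteq\mathbb{R}^k$ is a finite union of sets of the form
\[
F(T\times E),
\]
where $F$ is $\mathscr{R}$-definable, $T\subseteq (\lambda^{\mathbb{Z}})^m$ is $\bar{\mathbb{R}}_\lambda$-definable, and $E\subseteq \mathbb{R}^n$ is $\mathscr{R}$-definable. I would prove this by induction on formula complexity, using Miller's definable selection (which lifts from o-minimality of $\mathscr{R}$ to the d-minimal setting) to eliminate existential quantifiers, and using d-minimality of $(\mathscr{R},\lambda^{\mathbb{Z}})$ to keep control on Boolean combinations. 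This step is by far the main obstacle and is essentially the whole content of Tychonievich's~\cite[4.1.10]{Tychon-thesis}.

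Second, I would use countability of $X$ to trivialise the $E$-factor. Since each piece $F(T\times E)$ is countable and $T$ is countable, but $E$ is $\mathscr{R}$-definable hence o-minimally well-behaved, the o-minimal fibre-dimension theorem applied to $\mathscr{R}$ forces $F(\bar t,\cdot)$ to be generically constant on each connected component of $E$ as $\bar t$ varies over $T$; otherwise a suitable fibre would be uncountable. After $\mathscr{R}$-definable cell decomposition of $E$ done uniformly in $\bar{t}$ and relabelling, this reduces the description to $X=\{F(\bar t):\bar t\in T\}$ with $F$ an $\mathscr{R}$-definable function and $T\subseteq(\lambda^{\mathbb{Z}})^m$.

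Third, I would argue that the $\mathscr{R}$-definable $F$ can in fact be taken $\bar{\mathbb{R}}_\lambda$-definable on $T$. The hypothesis that $\mathscr{R}$ has field of exponents $\mathbb{Q}$ is crucial here: it prevents $F$, evaluated on the discrete arguments from $(\lambda^{\mathbb{Z}})^m$, from producing genuinely new transcendental behaviour beyond rational-power combinations of the coordinates and of finitely many named constants. After naming those finitely many constants as parameters, $F|_T$ is expressible by an $\bar{\mathbb{R}}_\lambda$-definable formula. Finally I would combine the finitely many surjections $(\lambda^{\mathbb{Z}})^{m_i}\to X_i$ obtained from each piece into a single surjection $(\lambda^{\mathbb{Z}})^m\to X$ by padding to a common $m$ (projecting out extra coordinates) and using a fixed sign or partition of $\lambda^{\mathbb{Z}}$ on one coordinate to realise the finite disjoint union inside $(\lambda^{\mathbb{Z}})^m$.

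The main obstacle is the first step; everything afterwards is comparatively formal once the normal form is in hand. The third step is the second-hardest point, and it is where the ``field of exponents $\mathbb{Q}$'' assumption on $\mathscr{R}$ must really be used.
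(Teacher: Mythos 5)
The first thing to say is that the paper does not prove Fact~\ref{prop:ty} at all: it is quoted as a special case of \cite[4.1.10]{Tychon-thesis} and used as a black box. So the question is whether your sketch would itself amount to a proof, and it would not. The lesser problem you concede yourself: your step 1 \emph{is} the cited theorem, so no difficulty has been reduced. (Your step 2 is fine, and in fact easier than you make it: for fixed $\bar t\in T$ the set $F(\{\bar t\}\times E)$ is a countable $\mathscr{R}$-definable set, hence finite, of cardinality uniformly bounded in $\bar t$ by o-minimality; no fibre-dimension argument is needed.)

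The serious problem is step 3. Having field of exponents $\mathbb{Q}$ is an \emph{asymptotic} constraint on $\mathscr{R}$ (every definable unary function is asymptotic at infinity to $cx^{q}$ with $q\in\mathbb{Q}$); it says nothing about the exact values of a definable function on a discrete set, and it does not make $F\restriction(\lambda^{\mathbb{Z}})^{m}$ expressible semialgebraically in the coordinates and finitely many named constants. Concretely, take $\mathscr{R}$ to be the field with restricted analytic functions (field of exponents $\mathbb{Q}$), $F$ the restricted sine, and $T=\lambda^{\mathbb{Z}}\cap(0,1]$. By the quantifier elimination of \cite{vdd-Powers2}, an $\bar{\mathbb{R}}_\lambda$-definable function is given piecewise by semialgebraic maps composed with the $\lambda$-floor map, so the image of $(\lambda^{\mathbb{Z}})^{m}$ under such a function lies in finitely many semialgebraic images of tuples of powers of $\lambda$ and parameters, hence in the algebraic closure of a finitely generated field. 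If your step 3 were a valid general principle for arbitrary $\mathscr{R}$-definable $F$, it would therefore force $\{\sin(\lambda^{-k}):k\geq 0\}$ to have finite transcendence degree over $\mathbb{Q}$, which fails for generic $\lambda$. The conclusion is that the definability of the parametrizing maps cannot be recovered \emph{after} the fact from a normal form whose maps are arbitrary $\mathscr{R}$-definable functions; whatever decomposition \cite[4.1.10]{Tychon-thesis} actually provides must already control the form of the dependence on the $\lambda^{\mathbb{Z}}$-coordinates, and that is where the field-of-exponents hypothesis is genuinely consumed --- inside the proof of the decomposition, not afterwards. Note finally that for the only use the paper makes of Fact~\ref{prop:ty} (Proposition~\ref{prop:lam-group}, via Fact~\ref{prop:induced}), an $(\mathscr{R},\lambda^{\mathbb{Z}})$-definable surjection would suffice, and that much your steps 1--2 would deliver once step 1 is granted.
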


\noindent Fact~\ref{prop:induced} is a minor rewording of \cite[4.1.2]{Tychon-thesis}.

\begin{fact}\label{prop:induced}
Every $(\mathscr{R},\lambda^{\mathbb{Z}})$-definable subset of $(\lambda^{\mathbb{Z}})^m$ is $(\lambda^{\mathbb{Z}},<,\cdot)$-definable.
\end{fact}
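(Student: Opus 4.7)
The plan is to prove this trace result by combining (i) a structure theorem for $(\mathscr{R},\lambda^{\mathbb{Z}})$ that reduces definable sets to projections of $\mathscr{R}$-definable sets with parameters from $\lambda^{\mathbb{Z}}$, with (ii) an analysis of how $\mathscr{R}$-definable sets meet the discrete grid $(\lambda^{\mathbb{Z}})^m$, using crucially the hypothesis that the field of exponents of $\mathscr{R}$ is $\mathbb{Q}$. The target structure $(\lambda^{\mathbb{Z}},<,\cdot)$ is isomorphic to Presburger arithmetic $(\mathbb{Z},<,+)$ via $k\mapsto\lambda^k$, so the goal is equivalent to showing that the trace is Presburger-definable in the exponents.

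First, I would invoke the standard structure / quantifier-simplification theorem for expansions of o-minimal structures by a dense-or-discrete predicate in the style of van den Dries and Miller. In the discrete case this yields a ``small closure'' description: every $(\mathscr{R},\lambda^{\mathbb{Z}})$-definable subset of $\mathbb{R}^m$ is a finite Boolean combination of sets of the form $\{x\in\mathbb{R}^m: \exists\, y\in(\lambda^{\mathbb{Z}})^k\ \varphi(x,y)\}$ with $\varphi$ an $\mathscr{R}$-formula. Intersecting such a set with $(\lambda^{\mathbb{Z}})^m$ and using o-minimal cell decomposition to eliminate the $\exists y$-quantifiers (replacing them by conditions on whether cells of $\varphi$ contain a point of $(\lambda^{\mathbb{Z}})^k$), the question reduces to the following trace lemma: for every $\mathscr{R}$-definable $Y\subseteq\mathbb{R}^m$ (parameters included), the set $Y\cap(\lambda^{\mathbb{Z}})^m$ is $(\lambda^{\mathbb{Z}},<,\cdot)$-definable.

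Second, I would prove the trace lemma by $\mathscr{R}$-cell decomposition of $Y$. Each cell is cut out by finitely many inequalities of the form $f(\bar x)<x_i<g(\bar x)$ with $f,g$ continuous $\mathscr{R}$-definable functions of fewer variables. Restricting $\bar x$ to $(\lambda^{\mathbb{Z}})^{m-1}$ and asking which $x_i\in\lambda^{\mathbb{Z}}$ satisfy the inequality reduces, via the substitution $x_j=\lambda^{k_j}$, to a comparison between $k_i$ and two $\mathscr{R}$-definable functions of $\bar k$ pulled back through the logarithm. The field-of-exponents-$\mathbb{Q}$ hypothesis is exactly what controls these pullbacks: by Miller's growth dichotomy together with the power-boundedness that a rational field of exponents enforces, an $\mathscr{R}$-definable function composed with coordinate-wise $k\mapsto\lambda^k$ is, on the integer lattice, asymptotically bounded and bracketed by $\mathbb{Q}$-linear forms in $\bar k$. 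Hence the set of $\bar k$ satisfying a cell inequality is cut out by finitely many $\mathbb{Q}$-linear inequalities and congruence conditions, i.e.\ is Presburger-definable.

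The main obstacle is this third step: rigorously controlling the behavior of arbitrary $\mathscr{R}$-definable functions on the lattice $(\lambda^{\mathbb{Z}})^m$. One cannot expect $f(\lambda^{k_1},\ldots,\lambda^{k_{m-1}})$ to land in $\lambda^{\mathbb{Z}}$; what one needs is only that its \emph{position relative to} $\lambda^{\mathbb{Z}}$ is Presburger in $\bar k$, so that the inequality $f(\bar x)<x_i$ descends to a Presburger condition. The cleanest route is to use the power-boundedness of $\mathscr{R}$ (a consequence of having field of exponents $\mathbb{Q}$ and no exponential) to sandwich $f$ between $\mathbb{Q}$-monomials in the $x_j$, combined with a uniform finiteness argument (via o-minimality) to ensure that only finitely many such sandwiches are needed over each cell. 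Once this is in place, gluing the cell-by-cell analysis back together via the first step delivers the desired $(\lambda^{\mathbb{Z}},<,\cdot)$-definition of the original trace.
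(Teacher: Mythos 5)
The paper does not prove this statement at all: it is quoted as Fact~\ref{prop:induced} directly from Tychonievich's thesis \cite[4.1.2]{Tychon-thesis}, so there is no in-paper argument to compare against. Your two-stage architecture --- (i) a structure theorem reducing $(\mathscr{R},\lambda^{\mathbb{Z}})$-definable sets to existential quantification over $(\lambda^{\mathbb{Z}})^k$ applied to $\mathscr{R}$-definable families, followed by (ii) a trace lemma showing that the intersection of an $\mathscr{R}$-definable set with the lattice is Presburger in the exponents --- is indeed the standard and correct skeleton for such a proof, and step (i) is unproblematic since Presburger sets are closed under projections and Boolean operations.

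The genuine gap is in your third step. Knowing that $f(\lambda^{k_1},\ldots,\lambda^{k_{m-1}})$ is \emph{bracketed} between two $\mathbb{Q}$-monomials, i.e.\ that $\log_\lambda f(\lambda^{\bar k})$ lies between two $\mathbb{Q}$-affine forms in $\bar k$, says nothing about how $f$ interacts with the lattice inside that multiplicative window: the condition $f(\lambda^{\bar k})<\lambda^{k_i}$ could cut out an essentially arbitrary set of $\bar k$ within the band between the two brackets. What is actually needed is the preparation theorem for polynomially bounded o-minimal structures (Lion--Rolin; Miller for field of exponents $\mathbb{Q}$): on each piece of a suitable decomposition, $f(\bar x)=a(\bar x')\,x_m^{q}\,u(\bar x)$ with $q\in\mathbb{Q}$ and $u$ a unit that can be taken arbitrarily close to $1$, followed by an induction on the number of variables applied to $a$. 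Even then one must handle the exceptional locus where $q\cdot\bar k+\log_\lambda a$ is within $|u-1|$ of an integer, where the unit decides which side of the threshold one lands on; in several variables this locus is not finite and its treatment is the real content of Tychonievich's argument. As written, your ``sandwich by monomials plus uniform finiteness'' step asserts the conclusion of this analysis without supplying it, so the proof is incomplete at precisely the point where the hypothesis on the field of exponents must do its work.
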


\noindent Facts \ref{prop:ty} and \ref{prop:induced} together imply that every countable $(\mathscr{R},\lambda^{\mathbb{Z}})$-definable group is isomorphic to a $(\mathbb{Z},<,+)$-definable group.
Now apply the following result of Onshuus and Vicaria~\cite{Onshuus-pres} to complete the proof of Proposition~\ref{prop:lam-group}.

\begin{fact}
Every $(\mathbb{Z},<,+)$-definable group is virtually abelian.
\end{fact}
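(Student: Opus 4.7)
The plan is to use quantifier elimination for Presburger arithmetic in the language $\{<,+,0,1\} \cup \{\equiv_N : N \geq 1\}$, which forces every definable function to be piecewise affine with integer coefficients, and then to analyze a generic piece of the group operation using associativity.

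First I would set up the piecewise affine description. Let $G \subseteq \mathbb{Z}^n$ be a $(\mathbb{Z},<,+)$-definable group with definable multiplication $\star \colon G \times G \to G$. Applying quantifier elimination to the graph of $\star$ yields a finite partition of $G \times G$ into cells $C_1,\ldots,C_k$, each cut out by finitely many linear inequalities and congruence conditions modulo some common integer $N$, such that on each $C_i$ the operation is affine: $x \star y = A_i x + B_i y + v_i$ for integer matrices $A_i, B_i$ and an integer vector $v_i$. The inversion map admits a similar description.

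Next I would pin down the generic behavior. Since there are only finitely many cells, some cell $C = C_{i_0}$ is ``arithmetically large'' in the sense that the set of triples $(x,y,z)$ with $(x,y) \in C$, $(x \star y, z) \in C$, $(y,z) \in C$, and $(x, y\star z) \in C$ remains Zariski-dense in $\mathbb{Z}^{3n}$. On such triples the identity $(x \star y) \star z = x \star (y \star z)$ becomes an affine identity $A(Ax + By + v) + Bz + v = Ax + B(Ay + Bz + v) + v$, and forcing this on a Zariski-dense set yields $A^2 = A$, $B^2 = B$, $AB = BA$, and $(A - B)v = 0$. Combining with the unit axiom applied at the cell containing the identity gives $A = B = I$ and thereby $x \star y = x + y + v$ on $C$; after a definable translation, $v = 0$.

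Then I would extract the finite-index abelian subgroup. Let $H \subseteq G$ be the definable set of $h \in G$ such that $h \star x = x \star h = h + x$ holds on a $(\mathbb{Z},<,+)$-definable cofinite subset of $G$. By construction $H$ contains the identity, is closed under $\star$ and inversion, and is abelian. The remaining claim is that $[G : H]$ is finite: any $g \in G$ multiplies cells into cells, so the coset $gH$ is determined by a finite amount of data (which cell $g$ lies in together with residues modulo $N$), bounding the index by something like $kN^{n}$.

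The main obstacle lies in the second and third steps. Presburger cells are arithmetically sparse rather than topologically open, so ``generic'' has to be measured by positive density inside residue classes, and one must carefully iterate the cell decomposition of $\star$ with itself to track which cell each intermediate product $x \star y$ lands in. This interplay between linear inequalities, congruence conditions, and piecewise-affine iteration is the technical heart of the argument, and it is where one must invoke a substantial combinatorial analysis (or, as in the cited work of Onshuus and Vicaria, a structural theorem on definable sets in Presburger) to promote the affine behavior on a single cell to an honest finite-index abelian subgroup.
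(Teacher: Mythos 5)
The paper does not supply its own proof of this statement; it is taken verbatim from Onshuus and Vicar\'ia, and the paper's only contribution here is to chain it with Tychonievich's results to deduce Proposition~\ref{prop:lam-group}. So there is no "paper's proof" to compare against; I will instead assess your sketch on its own terms.

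The overall strategy (quantifier elimination, piecewise-affine description of the group law, a generic-cell analysis forcing translation-like behavior, then a definable stabilizer) is the right circle of ideas, but two of the steps you present as routine actually fail as stated. First, a Presburger-definable function is piecewise affine with \emph{rational}, not integer, coefficients (consider $x \mapsto \lfloor x/2 \rfloor$ on an even congruence class); this is cosmetic but should be said correctly. More seriously, the deduction of $A = B = I$ does not follow from what you write. From the generic associativity you correctly extract $A^2 = A$, $B^2 = B$, $AB = BA$, $Av = Bv$, but idempotence alone is far from forcing $A = I$. You then appeal to "the unit axiom applied at the cell containing the identity," yet the cell of $G\times G$ containing pairs $(e,x)$ is in general a \emph{different} cell from your generic cell $C$, carrying different matrices $A', B', v'$; knowing $B' = I$ there tells you nothing about $A$ and $B$ on $C$. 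The correct way to finish this step is to use that right translation $x \mapsto x \star y_0$ is injective on $G$: restricted to the slice $\{x : (x,y_0) \in C\}$ this is $x \mapsto Ax + (By_0 + v)$, and if that slice spans (which is part of the "largeness" you need from $C$), injectivity forces $A$ to be invertible over $\mathbb{Q}$, whence $A^2 = A$ gives $A = I$, and symmetrically $B = I$. You should make that argument explicit rather than leaning on the unit axiom.

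The remaining gaps you already flag yourself, but they are genuinely load-bearing and cannot be waved off as "technical": (i) the existence of a single cell $C$ such that the set of triples $(x,y,z)$ with all four relevant pairs landing in $C$ is large enough to force the affine identity (this requires tracking where the \emph{images} $x\star y$ and $y \star z$ fall, not just where the inputs fall, and Presburger cells are not closed under the group operation); (ii) the definition of $H$ via a cofiniteness condition --- cofiniteness is not a natural Presburger-definable notion, and the clean version of this step uses the stabilizer of a generic type or a bounded-difference equivalence, not literal cofinite sets; and (iii) the finite-index claim, for which "bounded by something like $kN^n$" is an expectation, not an argument, since a priori distinct $g$'s in the same cell and residue class could lie in distinct cosets of $H$. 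In short, your proposal identifies the right raw materials, but the passage from a single well-behaved cell to a global finite-index abelian subgroup is exactly the content of the Onshuus--Vicar\'ia theorem and would need to be carried out in full to count as a proof.
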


\bibliographystyle{abbrv}
\bibliography{HW-Bib}

\end{document}